\newtheorem{proposition}{Proposition}[section]
\newtheorem{corollary}[proposition]{Corollary}
\newtheorem{theorem}[proposition]{Theorem}
\theoremstyle{definition}
\newtheorem{definition}[proposition]{Definition}
\newtheorem{examples}[proposition]{Examples}
\newtheorem{remark}[proposition]{Remark}
\newcommand{\thlabel}[1]{\label{th:#1}}
\newcommand{\thref}[1]{Theorem~\ref{th:#1}}
\newcommand{\selabel}[1]{\label{se:#1}}
\newcommand{\prlabel}[1]{\label{pr:#1}}
\newcommand{\prref}[1]{Proposition~\ref{pr:#1}}
\newcommand{\colabel}[1]{\label{co:#1}}
\newcommand{\coref}[1]{Corollary~\ref{co:#1}}
\newcommand{\relabel}[1]{\label{re:#1}}
\newcommand{\exlabel}[1]{\label{ex:#1}}
\newcommand{\delabel}[1]{\label{de:#1}}
\newcommand{\eqlabel}[1]{\label{eq:#1}}
\newcommand{\equref}[1]{(\ref{eq:#1})}
\def\ot{\otimes}
\newcommand{\Cc}{\mathcal{C}}
\def\*C{{}^*\hspace*{-1pt}{\Cc}}
\def\text#1{{\rm {\rm #1}}}
\begin{document}

\title[A new invariant for finite dimensional Leibniz algebras]
{A new invariant for finite dimensional Leibniz/Lie algebras}

\author{A. L. Agore}
\address{Vrije Universiteit Brussel, Pleinlaan 2, B-1050 Brussels, Belgium}
\address{Simion Stoilow Institute of Mathematics of the Romanian Academy, P.O. Box 1-764, 014700 Bucharest, Romania}
\email{ana.agore@vub.be,\, ana.agore@gmail.com}

\author{G. Militaru}
\address{Faculty of Mathematics and Computer Science, University of Bucharest, Str.
Academiei 14, RO-010014 Bucharest 1, Romania}
\email{gigel.militaru@fmi.unibuc.ro and gigel.militaru@gmail.com}

\thanks{The first named author was supported by a grant of Romanian Ministery
of Research and Innovation, CNCS - UEFISCDI, project number
PN-III-P1-1.1-TE-2016-0124, within PNCDI III and is a fellow of
FWO (Fonds voor Wetenschappelijk Onderzoek -- Flanders).}

\subjclass[2010]{17A32, 17A36, 17A60, 17B05, 17B40, 17B70}
\keywords{Lie algebras, universal constructions, automorphisms
group, gradings}


\maketitle

\begin{abstract}
For an $n$-dimensional Leibniz/Lie
algebra $\mathfrak{h}$ over a field $k$ we introduce a new
invariant ${\mathcal A}(\mathfrak{h})$, called the \emph{universal
algebra} of $\mathfrak{h}$, as a quotient of the polynomial
algebra $k[X_{ij} \, | \, i, j = 1, \cdots, n]$ through an ideal
generated by $n^3$ polynomials. We prove that ${\mathcal
A}(\mathfrak{h})$ admits a unique bialgebra structure which makes
it an initial object among all commutative bialgebras coacting on
$\mathfrak{h}$. The new object ${\mathcal A} (\mathfrak{h})$ is
the key tool in answering two open problems in Lie
algebra theory. First, we prove that the automorphism group ${\rm
Aut}_{Lbz} (\mathfrak{h})$ of $\mathfrak{h}$ is isomorphic to the
group $U \bigl( G({\mathcal A} (\mathfrak{h})^{\rm o} ) \bigl)$ of
all invertible group-like elements of the finite dual ${\mathcal
A} (\mathfrak{h})^{\rm o}$. Secondly, for an abelian group $G$, we
show that there exists a bijection between the set of all
$G$-gradings on $\mathfrak{h}$ and the set of all bialgebra
homomorphisms ${\mathcal A} (\mathfrak{h}) \to k[G]$. Based on
this, all $G$-gradings on $\mathfrak{h}$ are explicitly classified
and parameterized. ${\mathcal A} (\mathfrak{h})$ is also used to
prove that there exists a universal commutative Hopf algebra
associated to any finite dimensional Leibniz algebra
$\mathfrak{h}$.
\end{abstract}

\section*{Introduction}
Let $A$ be a unital associative algebra over a field $k$. M.E.
Sweedler's result \cite[Theorem 7.0.4]{Sw} which states that the
functor ${\rm Hom} (- , \, A) : {\rm CoAlg}_k \to {\rm Alg}_k
^{\rm op}$ from the category of coalgebras over $k$ to the
opposite of the category of $k$-algebras has a right adjoint
denoted by ${\rm M} (-, \, A)$ proved itself remarkable through
its applications. Furthermore, ${\rm M} (A, \, A)$ turns out to be
a bialgebra and the final object in the category of all bialgebras
that act on $A$ through a module algebra structure. The dual
version was considered by Tambara \cite{Tambara} and in a special
(graded) case by Manin \cite{Manin}. To be more precise,
\cite[Theorem 1.1]{Tambara} proves that if $A$ is a finite
dimensional algebra, then the tensor functor $A \ot - : {\rm
Alg}_k \to {\rm Alg}_k$ has a left adjoint denoted by $a(A, \,
-)$. In the same spirit, $a(A,\, A)$ is proved to be a bialgebra as well and the
initial object in the category of all bialgebras that coact on $A$
through a comodule algebra structure. Both objects are very
important: as explain in \cite{Manin}, the Hopf envelope of $a(A,
A)$ plays the role of a symmetry group in non-commutative
geometry. For further details we refer to \cite{AA, ana2019, anagorj}.
A more general construction, which contains all the above as
special cases, was recently considered in \cite{anagorj2} in the
context of $\Omega$-algebras.

The starting point of this paper was an attempt to prove the
counterpart of Tambara's result at the level of Leibniz algebras.
Introduced by Bloh \cite{Bl1} and rediscovered by Loday
\cite{Lod2}, Leibniz algebras are non-commutative generalizations
of Lie algebras. This new concept generated a lot of interest
mainly due to its interaction with (co)homology theory, vertex
operator algebras, the Godbillon-Vey invariants for foliations or
differential geometry. Another important concept for our approach
is that of a \emph{current Lie} algebra. Being first introduced in
physics \cite{Gel}, current Lie algebras, are Lie algebras of the
form $\mathfrak{g} \ot A$ with the bracket given by $\left[
x\ot a, \, y\ot b \right] := \left[x, \, y\right] \ot ab$, for all
$x$, $y \in \mathfrak{g}$ and $a$, $b\in A$, where $\mathfrak{g}$ is a Lie algebra and
$A$ is a commutative algebra. They are interesting
objects that arise in various branches of mathematics and physics
such as the theory of affine Kac-Moody algebras or the structure
of modular semisimple Lie algebras (see \cite{Zus, Zus2}). Current
Leibniz algebras are immediate generalizations, i.e. Leibniz algebras of the form $\mathfrak{h}\ot A $ whose bracket is
defined as in the case of Lie algebras, where this time
$\mathfrak{h}$ is a Leibniz algebra and $A$ a commutative algebra.
By fixing a Leibniz algebra $\mathfrak{h}$, we obtain a functor
$\mathfrak{h}\ot - : {\rm ComAlg}_k \to {\rm Lbz}_k$ from the
category of commutative algebras to the category of Leibniz
algebras called the current Leibniz algebra functor.
\thref{adjunctie} proves that the functor $\mathfrak{h} \ot - \, :
{\rm ComAlg}_k \to {\rm Lbz}_k$ has a left adjoint, denoted by
${\mathcal A} (\mathfrak{h}, \, -)$, if and only if $\mathfrak{h}$
is finite dimensional. For an $n$-dimensional Leibniz algebra
$\mathfrak{h}$ and an arbitrary Leibniz algebra $\mathfrak{g}$
with $|I| = {\rm dim}_k (\mathfrak{g})$, ${\mathcal A}
(\mathfrak{h}, \, \mathfrak{g})$ is a quotient of the usual
polynomial algebra $k [X_{si} \, | s = 1, \cdots, n, \, i\in I]$.
The commutative algebra ${\mathcal A} (\mathfrak{h}, \,
\mathfrak{g})$ is a very powerful
tool for studying Leibniz/Lie algebras as it captures most of the 
essential information on the two Leibniz/Lie algebras. Note for
instance that the characters of this algebra parameterize the set
of all Leibniz algebra homomorphisms between $\mathfrak{g}$ and
$\mathfrak{h}$ (\coref{morlbz}). \thref{adjunctie} has obviously a
Lie algebra counterpart. In this case, if $\mathfrak{g}$ is a Lie
algebra and $m$ a positive integer then the characters of the
commutative algebra ${\mathcal A} (\mathfrak{gl} (m, k), \,
\mathfrak{g})$ parameterize the space of all $m$-dimensional
representations of $\mathfrak{g}$ (\coref{replie}). The
commutative algebra ${\mathcal A} (\mathfrak{h}) := {\mathcal A}
(\mathfrak{h}, \, \mathfrak{h})$ is called the \emph{universal
algebra} of $\mathfrak{h}$: it is a quotient of the polynomial
algebra $M(n) := k[X_{ij} \, | \, i, j = 1, \cdots, n]$ through an
ideal generated by $n^3$ polynomials called the \emph{universal
polynomials} of $\mathfrak{h}$. \prref{bialgebra} proves that
${\mathcal A} (\mathfrak{h})$ has a canonical bialgebra structure
such that the projection $\pi: M(n) \to {\mathcal A}
(\mathfrak{h})$ is a bialgebra homomorphism. The first main application of
the universal (bi)algebra ${\mathcal A} (\mathfrak{h})$ of
$\mathfrak{h}$ is given in \thref{automorf} which provides an
explicit description of a group isomorphism between the group of
automorphisms of $\mathfrak{h}$ and the group of all invertible
group-like elements of the finite dual ${\mathcal A}
(\mathfrak{h})^{\rm o}$:
$$
{\rm Aut}_{{\rm Lbz}} (\mathfrak{h}) \cong U \bigl(G\bigl(
{\mathcal A} (\mathfrak{h})^{\rm o} \bigl) \bigl).
$$
We mention that achieving a complete description of the
automorphisms group ${\rm Aut}_{\rm Lie} (\mathfrak{h})$ of a
given Lie algebra $\mathfrak{h}$ is a classical \cite{borel,
jacobson} and notoriously difficult problem intimately related to
the structure of Lie algebras (for more details see the recent
papers \cite{am-2018, Ar, fisher} and their references). The unit
of the adjunction depicted in \thref{adjunctie}, denoted by
$\eta_{\mathfrak{h}} : \mathfrak{h} \to \mathfrak{h} \ot {\mathcal
A} (\mathfrak{h})$, endows $\mathfrak{h}$ with a right ${\mathcal
A}(\mathfrak{h})$-comodule structure and the pair $({\mathcal A}
(\mathfrak{h}), \, \eta_{\mathfrak{h}})$ is the intial object in
the category of all commutative bialgebras that coact on the
Leibniz algebra $\mathfrak{h}$ (\thref{univbialg}). This result
allows for two important consequences: \prref{graduari} proves
that for an abelian group $G$ there exists an explicitly described
bijection between the set of all $G$-gradings on $\mathfrak{h}$
and the set of all bialgebra homomorphisms ${\mathcal A}
(\mathfrak{h}) \to k[G]$.
Furthermore, all $G$-gradings on
$\mathfrak{h}$ are classified in
\thref{nouaclas}: the set $G$-${\rm 
\textbf{gradings}}(\mathfrak{h})$ of isomorphism classes of all
$G$-gradings on $\mathfrak{h}$ is in bijection with the quotient
set $ {\rm Hom}_{\rm BiAlg} \, \bigl( {\mathcal A} (\mathfrak{h})
, \, k[G] \bigl)/\approx$ of all bialgebra homomorphisms ${\mathcal A}
(\mathfrak{h}) \, \to  k[G]$ by the equivalence relation
given by the usual conjugation with an invertible group-like
element. Secondly, if $G$ is a finite group, \prref{actiuni}
shows that there exists a bijection between the set of all actions
as automorphisms of $G$ on $\mathfrak{h}$ (i.e. morphisms of
groups $G \to {\rm Aut}_{{\rm Lbz}} (\mathfrak{h})$) and the set
of all bialgebra homomorphisms ${\mathcal A} (\mathfrak{h}) \to
k[G]^*$. Concerning the last two results we mention that there
exists a vast literature concerning the classification of all
$G$-gradings on a given Lie algebra (see \cite{bahturin, eld, G, MZ}
and their references). On the other hand, the study of actions as
automorphisms of a group $G$ on a Lie algebra $\mathfrak{h}$ goes
back to Hilbert's invariant theory whose foundation was set at the
level of Lie algebras in the classical papers \cite{borel, bra,
thrall}; for further details see \cite{am-2018} and the references
therein. Using once again \thref{univbialg} and the existence of a
free commutative Hopf algebra on any commutative bialgebra
\cite[Theorem 65, (2)]{T}, we prove in \thref{univhopf} that there
exists a universal coacting Hopf algebra on any finite dimensional
Leibniz algebra. We point out that, to the best of our knowledge,
this is the only universal Hopf algebra associated to a Leibniz
algebra appearing in the literature.

\section{Preliminaries}\selabel{prel}
All vector spaces, (bi)linear maps, Leibniz, Lie or associative
algebras, bialgebras and so on are over an arbitrary field $k$ and $\ot = \ot_k$. A
Leibniz algebra is a vector space $\mathfrak{h}$, together with a
bilinear map $[- , \, -] : \mathfrak{h} \times \mathfrak{h} \to
\mathfrak{h}$ satisfying the Leibniz identity for any $x$, $y$, $z
\in \mathfrak{h}$:
\begin{equation}\eqlabel{Lbz1}
\left[ x,\, \left[y, \, z \right] \right] = \left[ \left[x, \,
y\right], \, z \right] - \left[\left[x, \, z\right] , \, y\right]
\end{equation}
Any Lie algebra is a Leibniz algebra, and a Leibniz algebra
$\mathfrak{h}$ satisfying $[x, \, x] = 0$, for all $x \in
\mathfrak{h}$ is a Lie algebra. We shall denote by ${\rm Aut}_{\rm
Lbz} (\mathfrak{h})$ (resp. ${\rm Aut}_{\rm Lie} (\mathfrak{h})$)
the automorphisms group of a Leibniz (resp. Lie) algebra
$\mathfrak{h}$. Any vector space $V$ is a Leibniz algebra with trivial bracket $[x,\, y] := 0$, for all $x$, $y\in V$ -- such a
Leibniz algebra is called \emph{abelian} and will be denoted by
$V_0$. For two subspaces $A$ and $B$ of a Leibniz algebra
$\mathfrak{h}$ we denote by $[A, \, B]$ the vector space generated
by all brackets $[a, \, b]$, for any $a \in A$ and $b\in B$. In
particular, $\mathfrak{h}' := [\mathfrak{h}, \, \mathfrak{h}]$ is
called the derived subalgebra of $\mathfrak{h}$.

We shall denote by ${\rm Lbz}_k$, ${\rm Lie}_k$ and ${\rm
ComAlg}_k$ the categories of Leibniz, Lie and respectively
commutative associative algebras. Furthermore, the category of
commutative bialgebras (resp. Hopf algebras) is denoted
by ${\rm ComBiAlg}_k$ (resp. ${\rm ComHopf}_k$).
If $\mathfrak{h}$ is a
Leibniz algebra and $A$ a commutative algebra then
$\mathfrak{h}\ot A $ is a Leibniz algebra with bracket defined
for any $x$, $y \in \mathfrak{h}$ and $a$, $b\in A$ by:
\begin{equation}\eqlabel{curant}
\left[ x\ot a, \, y\ot b \right] := \left[x, \, y\right] \ot ab
\end{equation}
called the \emph{current Leibniz} algebra. Indeed, as $A$
is a commutative and associative algebra, we have:
\begin{eqnarray*}
&& \left[ \, \left[x \ot a, \, y\ot b \right], \, z\ot c \right] -
\left[ \, \left[x \ot a, \, z\ot c \right], \, y\ot b \right] \\
&& = \left[ \, \left[x, \, y\right], \, z \right] \ot abc -
\left[ \, \left[x, \, z\right] , \, y\right] \ot acb \\
&& = \bigl( \left[ \, \left[x, \, y\right], \, z \right] - \left[
\, \left[x, \, z\right] , \, y\right] \bigl) \ot abc \\
&& = \left[x,\, \left[y, \, z \right] \, \right] \ot abc =
\left[x\ot a, \, \left[y\ot b, \, z \ot c\right] \, \right]
\end{eqnarray*}
for all $x$, $y$, $z \in \mathfrak{h}$ and $a$, $b$, $c\in A$,
i.e. the Leibniz identity \equref{Lbz1} holds for $\mathfrak{h}\ot
A $. For a fixed Leibniz algebra $\mathfrak{h}$,  assigning $A
\mapsto \mathfrak{h} \ot A$ defines a functor $\mathfrak{h} \ot - \, : {\rm ComAlg}_k \to {\rm Lbz}_k$ from the
category of commutative $k$-algebras to the category of Leibniz
algebras called the current Leibniz algebra functor. If $f : A \to
B$ is an algebra map then ${\rm Id}_{\mathfrak{h}} \, \ot f :
\mathfrak{h} \ot A \to \mathfrak{h} \ot B$ is a morphism of
Leibniz algebras.

For basic concepts on category theory we refer the reader to \cite{mlane}
and for unexplained notions pertaining to Hopf algebras to \cite{radford, Sw}.

\section{Universal constructions}\selabel{sect2}

Our first resut is the Leibniz algebra counterpart of
\cite[Theorem 1.1]{Tambara}.

\begin{theorem}\thlabel{adjunctie}
Let $\mathfrak{h}$ be a Leibniz algebra. Then the current Leibniz
algebra functor $\mathfrak{h} \ot - \, : {\rm ComAlg}_k \to {\rm
Lbz}_k$ has a left adjoint if and only if $\mathfrak{h}$ is finite
dimensional. Moreover, if $\mathfrak{h} \neq 0$ the functor
$\mathfrak{h} \ot - \, $ does not admit a right adjoint.
\end{theorem}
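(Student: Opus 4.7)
To prove that finite dimensionality implies the existence of a left adjoint, I would construct it explicitly. Fix a basis $\{e_1,\dots,e_n\}$ of $\mathfrak{h}$ with structure constants $[e_s,e_t]=\sum_r\tau^r_{st}e_r$. For an arbitrary Leibniz algebra $\mathfrak{g}$ with basis $\{f_i\}_{i\in I}$ and $[f_i,f_j]=\sum_k\sigma^k_{ij}f_k$, define $\mathcal{A}(\mathfrak{h},\mathfrak{g})$ as the quotient of $k[X_{si}\mid s=1,\dots,n,\,i\in I]$ by the ideal generated by
\[
\sum_{s,t=1}^{n}\tau^r_{st}X_{si}X_{tj}\;-\;\sum_{k}\sigma^k_{ij}X_{rk}\qquad(r\in\{1,\dots,n\},\ i,j\in I),
\]
and let $\eta_\mathfrak{g}\colon\mathfrak{g}\to\mathfrak{h}\ot\mathcal{A}(\mathfrak{h},\mathfrak{g})$ be the linear map $f_i\mapsto\sum_s e_s\ot\ol{X_{si}}$. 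The core computation is that, for any commutative algebra $A$, a linear map $\varphi\colon\mathfrak{g}\to\mathfrak{h}\ot A$ with $\varphi(f_i)=\sum_s e_s\ot a_{si}$ preserves brackets if and only if $\sum_{s,t}\tau^r_{st}a_{si}a_{tj}=\sum_k\sigma^k_{ij}a_{rk}$ holds in $A$ for all $r,i,j$; this is a direct calculation comparing $\varphi([f_i,f_j])$ with $[\varphi(f_i),\varphi(f_j)]$ using the current Leibniz bracket and commutativity of $A$. Thus any such $\varphi$ factors uniquely through $\eta_\mathfrak{g}$ via the algebra morphism $\tilde\varphi\colon\mathcal{A}(\mathfrak{h},\mathfrak{g})\to A$ with $\tilde\varphi(\ol{X_{si}})=a_{si}$, yielding the natural bijection $\Hom_{{\rm ComAlg}_k}(\mathcal{A}(\mathfrak{h},\mathfrak{g}),A)\cong\Hom_{{\rm Lbz}_k}(\mathfrak{g},\mathfrak{h}\ot A)$. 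Independence from the chosen bases follows a posteriori from the uniqueness of adjoints.

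\textbf{Necessity.} For the converse I would invoke limit preservation: if $\mathfrak{h}\ot-$ admits a left adjoint then it preserves all limits and in particular all products, so the canonical map $\mathfrak{h}\ot\prod_{i\in I}A_i\to\prod_{i\in I}(\mathfrak{h}\ot A_i)$ must be bijective for every family $(A_i)$ (recall that products in both ${\rm ComAlg}_k$ and ${\rm Lbz}_k$ are computed on underlying vector spaces). Specializing to an infinite index set $I$ with $A_i=k$ reduces this to the classical comparison $\mathfrak{h}\ot k^I\to\mathfrak{h}^I$, whose image consists of exactly those families whose components all lie in a common finite-dimensional subspace of $\mathfrak{h}$. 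Assuming $\mathfrak{h}$ were infinite dimensional, a linearly independent sequence $(v_i)_{i\in I}\subset\mathfrak{h}$ would produce $(v_i)_i\in\mathfrak{h}^I$ outside the image, forcing the contradiction $\dim_k\mathfrak{h}<\infty$.

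\textbf{No right adjoint.} For the last assertion, a right adjoint for $\mathfrak{h}\ot-$ would force this functor to preserve colimits and in particular the initial object. The initial object of ${\rm ComAlg}_k$ is $k$, while that of ${\rm Lbz}_k$ is the zero Leibniz algebra; since $\mathfrak{h}\ot k\cong\mathfrak{h}\neq 0$ by hypothesis, no such right adjoint can exist. The only substantive work lies in the sufficiency part---verifying that bracket-preservation translates verbatim into the prescribed polynomial relations defining $\mathcal{A}(\mathfrak{h},\mathfrak{g})$---while the other two parts are short categorical obstructions via (co)limit preservation.
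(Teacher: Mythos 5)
Your proof is correct, and for the two main parts it coincides with the paper's argument: the sufficiency direction builds the same quotient $k[X_{si}]/J$ with the same defining relations (yours are the negatives of the paper's universal polynomials $P^{(\mathfrak{h},\mathfrak{g})}_{(a,i,j)}$, generating the same ideal), the same unit $\eta_{\mathfrak{g}}$, and the same key computation that bracket-preservation of $\varphi(f_i)=\sum_s e_s\ot a_{si}$ is exactly the polynomial relations; the necessity direction likewise uses preservation of products and the failure of $\mathfrak{h}\ot k^I\to\mathfrak{h}^I$ to be surjective in infinite dimension. The only genuine divergence is the non-existence of a right adjoint: you test preservation of the empty colimit, noting that $k$ is initial in ${\rm ComAlg}_k$ while the initial object of ${\rm Lbz}_k$ is $0$, so $\mathfrak{h}\ot k\cong\mathfrak{h}\neq 0$ already obstructs a right adjoint. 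The paper instead uses binary coproducts: since the coproduct in ${\rm ComAlg}_k$ is the tensor product, preservation would force $\mathfrak{h}\cong\mathfrak{h}\sqcup\mathfrak{h}$ with identity injections, which is contradicted by the two distinct embeddings $\mathfrak{h}\hookrightarrow\mathfrak{h}\times\mathfrak{h}$. Your version is shorter and avoids any discussion of coproducts of Leibniz algebras; both are valid, and both rely on ${\rm ComAlg}_k$ meaning \emph{unital} commutative algebras (otherwise $k$ is not initial and the tensor product is not the coproduct). As a minor point, a complete write-up of the sufficiency part should still record that the objectwise universal property assembles into a functor $\mathcal{A}(\mathfrak{h},-)$ natural in both variables, as the paper does via the diagram defining $\mathcal{A}(\mathfrak{h},\alpha)$.
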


\begin{proof}
Assume first that $\mathfrak{h}$ is a finite dimensional
Leibniz algebra and ${\rm dim}_k (\mathfrak{h}) = n$. Fix
$\{e_1, \cdots, e_n\}$ a basis in $\mathfrak{h}$ and let
$\{\tau_{i, j}^s \, | \, i, j, s = 1, \cdots, n \}$ be the
structure constants of $\mathfrak{h}$, i.e. for any $i$, $j = 1,
\cdots, n$ we have:
\begin{equation}\eqlabel{const1}
\left[e_i, \, e_j \right]_{\mathfrak{h}} = \sum_{s=1}^n \,
\tau_{i, j}^s \, e_s.
\end{equation}
In what follows we shall explicitly construct a left adjoint of the current Leibniz algebra functor
$\mathfrak{h} \ot -$, denoted by ${\mathcal A}
(\mathfrak{h}, \, - ) : {\rm Lbz}_k \to {\rm ComAlg}_k$. Let $\mathfrak{g}$ be a Leibniz algebra and
let $\{f_i \, | \, i \in I\}$ be a basis of $\mathfrak{g}$. For
any $i$, $j\in I$, let $B_{i,j} \subseteq I$ be a finite subset of
$I$ such that for any $i$, $j \in I$ we have:
\begin{equation}\eqlabel{const2}
\left[f_i, \, f_j \right]_{\mathfrak{g}} = \sum_{u \in B_{i, j}}
\, \beta_{i, j}^u \, f_{u}.
\end{equation}
Let $k [X_{si} \, | s = 1, \cdots, n, \, i\in I]$
be the usual polynomial algebra and define
\begin{equation}\eqlabel{alguniv}
{\mathcal A} (\mathfrak{h}, \, \mathfrak{g}) :=  k [X_{si} \, | s
= 1, \cdots, n, \, i\in I] / J
\end{equation}
where $J$ is the ideal generated by all polynomials of the form
\begin{equation}\eqlabel{poluniv}
P_{(a, i, j)} ^{(\mathfrak{h}, \, \mathfrak{g})} := \sum_{u \in
B_{i, j}} \, \beta_{i, j}^u \, X_{au} - \sum_{s, t = 1}^n \,
\tau_{s, t}^a \, X_{si} X_{tj}, \quad {\rm for}\,\, {\rm all}\,\, a = 1, \cdots, n\,\, {\rm and}\,\, i,\, j\in I.
\end{equation}
We denote by $x_{si}
:= \widehat{X_{si}}$ the class of ${X_{si}}$ in the
algebra ${\mathcal A} (\mathfrak{h}, \, \mathfrak{g})$; thus the following relations hold in
the commutative algebra ${\mathcal A} (\mathfrak{h}, \,
\mathfrak{g})$:
\begin{equation}\eqlabel{relatii}
\sum_{u \in B_{i, j}} \, \beta_{i, j}^u \, x_{au} = \sum_{s, t =
1}^n \, \tau_{s, t}^a \, x_{si} x_{tj}, \quad {\rm for}\,\, {\rm all}\,\, a = 1, \cdots, n,\,\, {\rm and} \,\, i,\, j\in I.
\end{equation}
Now we consider the map:
\begin{equation}\eqlabel{unitadj}
\eta_{\mathfrak{g}} : \mathfrak{g} \to \mathfrak{h} \ot {\mathcal
A} (\mathfrak{h}, \, \mathfrak{g}), \quad \eta_{\mathfrak{g}}
(f_i) := \sum_{s=1}^n \, e_s \ot x_{si},  \quad {\rm for\,\,
all}\,\, i\in I.
\end{equation}

We shall prove first that $\eta_{\mathfrak{g}}$
is a Leibniz algebra homomorphism. Indeed, for any $i$, $j\in I$
we have:
\begin{eqnarray*}
&& \left[\eta_{\mathfrak{g}} (f_i), \, \eta_{\mathfrak{g}} (f_j)
\right]_{\mathfrak{h} \ot {\mathcal A} (\mathfrak{h}, \,
\mathfrak{g})} = \left[ \sum_{s=1}^n \, e_s \ot x_{si}, \,\,
\sum_{t=1}^n \, e_t \ot x_{tj} \right]_{\mathfrak{h} \ot {\mathcal
A} (\mathfrak{h}, \, \mathfrak{g})}= \sum_{s, t =1}^n \, \left[e_s, \, e_t \right]_{\mathfrak{h}}
\ot x_{si} x_{tj}\\
&& = \sum_{a =1}^n \, e_a \ot \underline{\bigl(\sum_{s, \,t =
1}^n \, \tau_{s, t}^a \, x_{si} x_{tj} \bigl)} \,\, \stackrel{\equref{relatii}}
= \,\, \sum_{a=1}^n \, e_a \ot \bigl( \sum_{u \in B_{i, j}} \,
\beta_{i, j}^u \, x_{au}\bigl) = \sum_{u \in B_{i, j}} \, \beta_{i, j}^u \, \eta_{\mathfrak{g}}
(f_u) \\
&& = \eta_{\mathfrak{g}} (\left[f_i, \, f_j \right]_{\mathfrak{g}})
\end{eqnarray*}

Now we prove that for any Leibniz algebra $\mathfrak{g}$ and any
commutative algebra $A$ the map defined below is bijective:
\begin{equation}\eqlabel{adjp}
\gamma_{\mathfrak{g}, \, A} \, : {\rm Hom}_{\rm Alg_k} \, (
{\mathcal A} (\mathfrak{h}, \, \mathfrak{g}), \, A) \to {\rm
Hom}_{\rm Lbz_k} \, (\mathfrak{g}, \, \mathfrak{h} \ot A), \quad
\gamma_{\mathfrak{g}, \, A} (\theta) := \bigl( {\rm
Id}_{\mathfrak{h}} \ot \theta \bigl) \circ \eta_{\mathfrak{g}}
\end{equation}
To this end, let $f : \mathfrak{g} \to \mathfrak{h} \ot A$ be a
Leibniz algebra homomorphism. We have to prove that there exists a unique algebra homomorphism $\theta : {\mathcal A}
(\mathfrak{h}, \, \mathfrak{g}) \to A$ such that the following diagram is commutative:
\begin{eqnarray} \eqlabel{diagrama10}
\xymatrix {& \mathfrak{g} \ar[r]^-{\eta_{\mathfrak{g}} } \ar[dr]_{f}
& {  \mathfrak{h} \ot {\mathcal A} (\mathfrak{h},
\, \mathfrak{g})} \ar[d]^{ {\rm Id}_{\mathfrak{h}} \ot \theta }\\
& {} & {\mathfrak{h} \ot A}} \qquad i.e. \,\,\, f = \bigl( {\rm Id}_{\mathfrak{h}} \ot \theta
\bigl) \circ \, \eta_{\mathfrak{g}}.
\end{eqnarray}
Let $\{d_{si} \, | \, s = 1,
\cdots, n, i\in I \}$ be a family of elements of $A$ such that for
any $i\in I$ we have:
\begin{equation}\eqlabel{constfmor}
f( f_i) = \sum_{s=1}^n \, e_s \ot d_{si}
\end{equation}
A straightforward computation shows that for all $i$, $j\in I$ we have:
$$
f \bigl( \left[f_i, \, f_j \right]_{\mathfrak{g}} \bigl) =
\sum_{a=1}^n \, e_a \ot \bigl ( \sum_{u \in B_{ij}} \, \beta_{i,
j}^u \, d_{au} \bigl) \,\, {\rm and} \,\,\left[ f(f_i), \, f(f_j)
\right]_{\mathfrak{h} \ot A} = \sum_{a=1}^n \, e_a \ot \bigl (
\sum_{s, t = 1}^n  \, \tau_{s, t}^a \, d_{si} d_{tj} \bigl)
$$
Since $f: \mathfrak{g} \to \mathfrak{h} \ot A$ is a Leibniz
algebra homomorphism, it follows that the family of elements $\{d_{si} \, |
\, s = 1, \cdots, n, i\in I \}$ need to fullfil the following relations in $A$:
\begin{equation}\eqlabel{deurile}
\sum_{u \in B_{ij}} \, \beta_{i, j}^u \, d_{au} = \sum_{s, t =
1}^n  \, \tau_{s, t}^a \, d_{si} d_{tj}, \quad {\rm for}\,\, {\rm all}\,\, i,\, j\in I\,\, {\rm and}\,\, a = 1, \cdots, n.
\end{equation}
The universal property of the polynomial algebra yields a unique algebra homomorphism $v : k [X_{si} \,
| s = 1, \cdots, n, \, i\in I] \to A$ such that $v (X_{si}) =
d_{si}$, for all $s = 1, \cdots, n$ and $i\in I$. It can be easily seen that
${\rm Ker} (v) \supseteq J$, where $J$ is the ideal generated by
all polynomials listed in \equref{poluniv}. Indeed, for any $i$, $j\in I$
and $a = 1, \cdots, n$ we have
\begin{eqnarray*}
v \bigl( P_{(a, i, j)} ^{(\mathfrak{h}, \, \mathfrak{g})}
 \bigl) = v \bigl(  \sum_{u \in
B_{i, j}} \, \beta_{i, j}^u \, X_{au} - \sum_{s, t = 1}^n \,
\tau_{s, t}^a \, X_{si} X_{tj} \bigl) = \sum_{u \in B_{i, j}} \, \beta_{i, j}^u \, d_{au} - \sum_{s,
t = 1}^n \, \tau_{s, t}^a \, d_{si} d_{tj}
\stackrel{\equref{deurile}} = 0.
\end{eqnarray*}
Thus, there exists a unique algebra homomorphism
$\theta : {\mathcal A} (\mathfrak{h}, \, \mathfrak{g}) \to A$ such
that $\theta (x_{si}) = d_{si}$, for all $s = 1, \cdots, n$ and
$i\in I$. Furthermore, for any $i\in I$ we have:
\begin{eqnarray*}
\bigl( {\rm Id}_{\mathfrak{h}} \ot \theta \bigl) \circ \,
\eta_{\mathfrak{g}} (f_i) = \bigl( {\rm Id}_{\mathfrak{h}} \ot
\theta \bigl) \bigl( \sum_{s=1}^n \, e_s \ot x_{si} \bigl) = \sum_{s=1}^n \, e_s \ot d_{si} \stackrel{\equref{constfmor}}
= f (f_i).
\end{eqnarray*}
Therefore, we have $\bigl( {\rm Id}_{\mathfrak{h}} \ot \theta \bigl) \circ \,
\eta_{\mathfrak{g}} = f$ as desired. Next we show that $\theta$ is the unique morphism with this property. Let $\tilde{\theta} : {\mathcal A}
(\mathfrak{h}, \, \mathfrak{g}) \to A$ be another algebra homomorphism such that $\bigl( {\rm Id}_{\mathfrak{h}} \ot
\tilde{\theta} \bigl) \circ \, \eta_{\mathfrak{g}} (f_i) = f
(f_i)$, for all $i\in I$. Then,  $\sum_{s=1}^n \, e_s \ot
\tilde{\theta} (x_{si}) = \sum_{s=1}^n \, e_s \ot d_{si}$, and
hence $\tilde{\theta} (x_{si}) = d_{si} = \theta (x_{si})$, for
all $s= 1, \cdots, n$ and $i\in I$. Since $\{x_{si} \, | s= 1, \cdots, n, i \in I \, \}$ is a
system of generators for the algebra ${\mathcal A} (\mathfrak{h},
\, \mathfrak{g})$ we obtain $\tilde{\theta} =
\theta$. All in all, we have proved that the map
$\gamma_{\mathfrak{g}, \, A}$ given by \equref{adjp} is bijective.

Next we show that assigning to each Leibniz algebra $\mathfrak{g}$
the commutative algebra ${\mathcal A} (\mathfrak{h}, \,
\mathfrak{g})$ defines a functor  ${\mathcal A} (\mathfrak{h}, \,
- ) : {\rm Lbz}_k \to {\rm ComAlg}_k$. First, let $\alpha :
\mathfrak{g}_1 \to \mathfrak{g}_2$ be a Leibniz
algebra homomorphism. Using the bijectivity of the map defined by
\equref{adjp} for the Leibniz algebra homomorphism $f :=
\eta_{\mathfrak{g}_2} \circ \alpha$, yields a unique algebra homomorphism $\theta : {\mathcal A} (\mathfrak{h}, \,
\mathfrak{g}_1) \to {\mathcal A} (\mathfrak{h}, \,
\mathfrak{g}_2)$ such that the following diagram is commutative:
\begin{eqnarray} \eqlabel{diagramapag4}
\xymatrix {& \mathfrak{g}_1 \ar[rr]^-{\eta_{\mathfrak{g}_1} }
\ar[d]_{\alpha} & {} & {  \mathfrak{h} \ot {\mathcal A} (\mathfrak{h},
\, \mathfrak{g}_1)} \ar[d]^{ {\rm Id}_{\mathfrak{h}} \ot \theta }\\
& \mathfrak{g}_2 \ar[rr]^-{\eta_{\mathfrak{g}_2}} & {} & {\mathfrak{h}
\ot {\mathcal A} (\mathfrak{h}, \, \mathfrak{g}_2) } } \qquad i.e. \,\,\, \bigl( {\rm Id}_{\mathfrak{h}} \ot \theta
\bigl) \circ \, \eta_{\mathfrak{g}_1} = \eta_{\mathfrak{g}_2}
\circ \alpha
\end{eqnarray}
We denote this unique morphism $\theta$ by  ${\mathcal A}
(\mathfrak{h}, \, \alpha )$ and the functor ${\mathcal A}
(\mathfrak{h}, \, - )$ is now fully defined. Furthermore, the
commutativity of the diagram \equref{diagramapag4} shows the
naturality of $\gamma_{\mathfrak{g}, \, A}$ in $\mathfrak{g}$. It
can now be easily checked that ${\mathcal A} (\mathfrak{h}, \, -
)$ is indeed a functor and that $\gamma_{\mathfrak{g}, \, A}$ is
also natural in $A$. To conclude, the functor ${\mathcal A}
(\mathfrak{h}, \, - )$ is a left adjoint of the current Leibniz
algebra functor $ \mathfrak{h} \ot -$.

Conversely, assume that the functor $\mathfrak{h}\ot - : {\rm
ComAlg}_k \to {\rm Lbz}_k$ has a left adjoint. In particular,
$\mathfrak{h}\ot - $ preserves arbitrary products. Now recall that
in both categories ${\rm ComAlg}_k$ and ${\rm Lbz}_k$ products are
constructed as simply the products of the underlying vector
spaces. Imposing the condition that $\mathfrak{h}\ot - $ preserves
the product of a countable number of copies of the base field $k$
will easily lead to the finite dimensionality of $\mathfrak{h}$.

Assume, now that the functor $\mathfrak{h}\ot - : {\rm ComAlg}_k
\to {\rm Lbz}_k$ has a right adjoint. This implies that $\mathfrak{h}\ot - $ preserves coproducts.
Now, since in the category ${\rm ComAlg}_k$ of commutative algebras the coproduct of two
commutative algebras is given by their tensor product, it follows that for any commutative algebras $A$
and $B$ there exists an isomorphism of Leibniz algebras
$\mathfrak{h} \ot (A \ot B) \cong (\mathfrak{h} \ot A) \, \sqcup
(\mathfrak{h} \ot B)$, where we denote by $\sqcup$ the coproduct of two current Leibniz algebras. In particular, for
$A = B := k$, we obtain that $\mathfrak{h}\cong  \mathfrak{h} \sqcup \mathfrak{h}$ and the corresponding morphisms $\mathfrak{h} \to \mathfrak{h} \sqcup \mathfrak{h}$ are just the identity maps. Therefore, for every Leibniz algebra $\mathfrak{g}$ there exists a unique Leibniz algebra homomorphism $\mathfrak{g} \to \mathfrak{h}$. Now by taking $\mathfrak{g} = \mathfrak{h} \times \mathfrak{h}$ and the embeddings $\mathfrak{h} \hookrightarrow \mathfrak{h} \times \mathfrak{h}$ to different components we reach a contradiction if $\mathfrak{h} \neq 0$.
\end{proof}

\begin{remark} \relabel{remar1}
\thref{adjunctie} remains valid in the special case of Lie
algebras: if $\mathfrak{h}$ is a finite dimensional Lie algebra,
the current Lie algebra functor $\mathfrak{h}\ot - : {\rm
ComAlg}_k \to {\rm Lie}_k$ has a left adjoint ${\mathcal A}
(\mathfrak{h}, \, -)$ which is constructed as in the proof of
\thref{adjunctie}. We point out, however, that the polynomials
defined in \equref{poluniv} take a rather simplified form. The
skew symmetry fulfilled by the bracket of a Lie algebra imposes the
following restrictions on the structure constants:
 $$\tau_{i,i}^s = 0\,\, {\rm and}\,\, \tau_{i,j}^s = - \tau_{j,i}^s\,\, {\rm for}\,\, {\rm all}\,\, i, j, s = 1,\cdots,
n.$$
\end{remark}

The commutative algebra ${\mathcal A} (\mathfrak{h}, \,
\mathfrak{g})$ constructed in the proof of \thref{adjunctie}
provides an important tool for studying Lie/Leibniz algebras as it
captures most of the essential information on the two Lie/Leibniz
algebras. Indeed, note for instance that the characters of this
algebra (i.e. the algebra homomorphisms ${\mathcal A}
(\mathfrak{h}, \, \mathfrak{g}) \to k$) parameterize the set of
all Leibniz algebra homomorphisms between the two algebras. This follows
as an easy consequence of the bijection described in \equref{adjp} by taking $A
:= k$:

\begin{corollary}\colabel{morlbz}
Let $\mathfrak{g}$ and $\mathfrak{h}$ be two Leibniz algebras such
that $\mathfrak{h}$ is finite dimensional. Then the following map is bijective:
\begin{equation}\eqlabel{adjpk}
\gamma \, : {\rm Hom}_{\rm Alg_k} \, ( {\mathcal A} (\mathfrak{h},
\, \mathfrak{g}), \, k) \to {\rm Hom}_{\rm Lbz_k} \,
(\mathfrak{g}, \, \mathfrak{h}), \quad \gamma (\theta) := \bigl(
{\rm Id}_{\mathfrak{h}} \ot \theta \bigl) \circ
\eta_{\mathfrak{g}}.
\end{equation}
\end{corollary}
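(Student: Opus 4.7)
The plan is to derive this corollary as an immediate specialization of the adjunction bijection \equref{adjp} established in \thref{adjunctie}, taking the test commutative algebra to be the base field $A := k$ itself.

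First, I would invoke \thref{adjunctie}, which asserts that for the fixed finite dimensional Leibniz algebra $\mathfrak{h}$, the functor ${\mathcal A}(\mathfrak{h}, -)$ is left adjoint to the current Leibniz algebra functor $\mathfrak{h} \otimes -$, via the natural bijection
$$\gamma_{\mathfrak{g}, A} : {\rm Hom}_{\rm Alg_k}({\mathcal A}(\mathfrak{h}, \mathfrak{g}), A) \to {\rm Hom}_{\rm Lbz_k}(\mathfrak{g}, \mathfrak{h} \otimes A), \qquad \theta \mapsto ({\rm Id}_{\mathfrak{h}} \otimes \theta) \circ \eta_{\mathfrak{g}}.$$
Next, I would set $A := k$, which is trivially a commutative $k$-algebra. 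The map $\gamma_{\mathfrak{g}, k}$ then takes values in ${\rm Hom}_{\rm Lbz_k}(\mathfrak{g}, \mathfrak{h} \otimes k)$.

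The only remaining point is to identify $\mathfrak{h} \otimes k$ with $\mathfrak{h}$. Using the canonical $k$-linear isomorphism $\mathfrak{h} \otimes k \cong \mathfrak{h}$ sending $x \otimes \lambda \mapsto \lambda x$, and observing from the definition \equref{curant} of the current Leibniz bracket that $[x \otimes \lambda, y \otimes \mu] = [x, y] \otimes \lambda\mu$ corresponds to $\lambda\mu \, [x, y]$, this canonical isomorphism is in fact a Leibniz algebra isomorphism. Composing $\gamma_{\mathfrak{g}, k}$ with the induced bijection ${\rm Hom}_{\rm Lbz_k}(\mathfrak{g}, \mathfrak{h} \otimes k) \cong {\rm Hom}_{\rm Lbz_k}(\mathfrak{g}, \mathfrak{h})$ yields the desired map $\gamma$ of \equref{adjpk}, and bijectivity follows from the bijectivity already proved in \thref{adjunctie}.

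There is essentially no obstacle here: this is a clean specialization of the hom-set adjunction, and the only verification needed is the identification $\mathfrak{h} \otimes k \cong \mathfrak{h}$ as Leibniz algebras, which is immediate from the definition of the current bracket. The content of the corollary is conceptual rather than computational, namely the observation that characters of the universal algebra ${\mathcal A}(\mathfrak{h}, \mathfrak{g})$ parameterize Leibniz algebra homomorphisms $\mathfrak{g} \to \mathfrak{h}$.
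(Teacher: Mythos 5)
Your proposal is correct and matches the paper's own argument, which is exactly the specialization of the bijection \equref{adjp} from \thref{adjunctie} to the case $A := k$, using the canonical identification $\mathfrak{h} \ot k \cong \mathfrak{h}$ as Leibniz algebras. Your extra verification that this identification respects the current bracket \equref{curant} is a harmless (and slightly more careful) elaboration of what the paper leaves implicit.
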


In particular, by applying \coref{morlbz} for $\mathfrak{h} : =
\mathfrak{gl} (m, k)$ and an arbitrary Lie algebra $\mathfrak{g}$
we obtain:

\begin{corollary}\colabel{replie}
Let $\mathfrak{g}$ be a Lie algebra and $m$ a positive integer.
Then there exists a bijective correspondence between the space of
all $m$-dimensional representations of $\mathfrak{g}$ and the
space of all algebra homomorphisms ${\mathcal A}
(\mathfrak{gl} (m, k), \, \mathfrak{g}) \to k$.
\end{corollary}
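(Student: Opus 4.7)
The plan is to specialize Corollary~\ref{co:morlbz} to the case $\mathfrak{h} := \mathfrak{gl}(m,k)$. Since this algebra is finite dimensional of dimension $m^2$, the hypothesis of \coref{morlbz} is satisfied, and we immediately get a bijection
$$
\gamma : \mathrm{Hom}_{\rm Alg_k}\bigl({\mathcal A}(\mathfrak{gl}(m,k),\, \mathfrak{g}),\, k\bigl) \;\to\; \mathrm{Hom}_{\rm Lbz_k}(\mathfrak{g},\, \mathfrak{gl}(m,k))
$$
given explicitly by $\gamma(\theta) := (\mathrm{Id}_{\mathfrak{gl}(m,k)} \ot \theta) \circ \eta_{\mathfrak{g}}$. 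All that remains is to identify the right-hand side with the set of $m$-dimensional representations of $\mathfrak{g}$.

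For this identification I would argue as follows. Both $\mathfrak{g}$ and $\mathfrak{gl}(m,k)$ are Lie algebras, and a Lie algebra is just a Leibniz algebra whose bracket happens to satisfy $[x,x]=0$. Therefore a Leibniz algebra homomorphism $\mathfrak{g} \to \mathfrak{gl}(m,k)$ is a linear map preserving the bracket, which is the same thing as a Lie algebra homomorphism $\mathfrak{g} \to \mathfrak{gl}(m,k)$. By the standard definition, such a homomorphism is precisely an $m$-dimensional representation of $\mathfrak{g}$. Composing this elementary identification with the bijection $\gamma$ yields the stated correspondence.

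There is essentially no obstacle beyond the bookkeeping of noting that the construction of ${\mathcal A}(\mathfrak{h}, \mathfrak{g})$ in the proof of \thref{adjunctie} is insensitive to whether $\mathfrak{h}$ and $\mathfrak{g}$ are viewed inside ${\rm Lbz}_k$ or ${\rm Lie}_k$, the universal polynomials of \equref{poluniv} depending only on the structure constants. Remark~\ref{re:remar1} confirms that the Lie-algebra counterpart of \thref{adjunctie}, and hence of \coref{morlbz}, holds with the same algebra ${\mathcal A}(\mathfrak{gl}(m,k), \mathfrak{g})$, so the corollary is truly an immediate specialization.
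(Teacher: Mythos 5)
Your proposal is correct and follows essentially the same route as the paper: the paper derives \coref{replie} directly by applying \coref{morlbz} with $\mathfrak{h} := \mathfrak{gl}(m,k)$ and identifying Leibniz algebra homomorphisms between Lie algebras (a full subcategory of ${\rm Lbz}_k$) with Lie algebra homomorphisms, hence with $m$-dimensional representations. Your additional remark that the construction of ${\mathcal A}(\mathfrak{h},\mathfrak{g})$ depends only on the structure constants matches \reref{remar1}.
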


\begin{examples} \exlabel{exgen}
1. If $\mathfrak{h}$ and $\mathfrak{g}$ are abelian Leibniz
algebras then ${\mathcal A} (\mathfrak{h}, \, \mathfrak{g}) \cong
k [X_{si} \, | s = 1, \cdots, n, \, i\in I]$, where $n = {\rm
dim}_k (\mathfrak{h})$ and $|I| = {\rm dim}_k (\mathfrak{g})$.

2. Let $\mathfrak{h}$ be an $n$-dimensional Leibniz algebra with
structure constants $\{\tau_{i, j}^s \, | \, i, j, s = 1, \cdots,
n \}$. Then ${\mathcal A} (\mathfrak{h}, \, k) \cong k[X_1,
\cdots, X_n]/J$, where $J$ is the ideal generated by the
polynomials $\sum_{s, t = 1}^n \, \tau_{s, t}^a \, X_{s} X_{t}$,
for all $a = 1, \cdots, n$.

3. Let $\mathfrak{g}$ be a Leibniz algebra. Then ${\mathcal A} (k,
\, \mathfrak{g}) \cong S (\mathfrak{g}/\mathfrak{g}')$, the
symmetric algebra of $\mathfrak{g}/\mathfrak{g}'$, where
$\mathfrak{g}'$ is the derived subalgebra of $\mathfrak{g}$. In
particular, if $\mathfrak{g}$ is perfect (that is $\mathfrak{g}' =
\mathfrak{g}$), then ${\mathcal A} (k, \, \mathfrak{g}) \cong k$.

Indeed, the functor ${\mathcal A} (k, \, -)$ is a left adjoint for
the tensor functor $k \ot - : {\rm ComAlg}_k \to {\rm Lbz}_k$;
since the tensor product is also taken over $k$ this functor is
isomorphic to the functor $(-)_0 : {\rm ComAlg}_k \to {\rm
Lbz}_k$, which sends any commutative algebra $A$ to the abelian Leibniz algebra $A_0 := A$. We shall prove that the functor
$\mathfrak{g} \mapsto S (\mathfrak{g}/\mathfrak{g}')$ is a left
adjoint of $(-)_0$. The uniqueness of adjoint functors
\cite{mlane} will then lead to the desired algebra isomorphism
${\mathcal A} (k, \, \mathfrak{g}) \cong S
(\mathfrak{g}/\mathfrak{g}')$.

Let $\mathfrak{g}$ be a Leibniz algebra and define
$\overline{\eta_{\mathfrak{g}}} : \mathfrak{g} \to S
(\mathfrak{g}/\mathfrak{g}')$ as the composition
$\overline{\eta_{\mathfrak{g}}} := i \circ \pi$, where $\pi:
\mathfrak{g} \to \mathfrak{g}/\mathfrak{g}'$ is the usual
projection and $i : \mathfrak{g}/\mathfrak{g}' \hookrightarrow S
(\mathfrak{g}/\mathfrak{g}')$ is the canonical inclusion of the
vector space $\mathfrak{g}/\mathfrak{g}'$ in its symmetric
algebra. We shall prove now that the following map is bijective for any commutative algebra $A$ and any Leibniz
algebra $\mathfrak{g}$:
\begin{equation}\eqlabel{adjdoi}
\overline{\gamma_{\mathfrak{g}, \, A}} \, : {\rm Hom}_{\rm Alg_k}
\, ( S (\mathfrak{g}/\mathfrak{g}'), \, A) \to {\rm Hom}_{\rm
Lbz_k} \, (\mathfrak{g}, \, A_0), \quad
\overline{\gamma_{\mathfrak{g}, \, A}} \, (\theta) := \theta \circ
\overline{\eta_{\mathfrak{g}}}
\end{equation}
This shows that the functor $\mathfrak{g}
\mapsto S (\mathfrak{g}/\mathfrak{g}')$ is a left adjoint of
$(-)_0$. Indeed, let $f: \mathfrak{g} \to A_0$ be a Leibniz
algebra homomorphism, i.e. $f$ is a $k$-linear map such that $f (\left[x, \, y
\right]) = 0$, for all $x$, $y\in \mathfrak{g}$. That is ${\rm
Ker(f)}$ contains $\mathfrak{g}'$, the derived algebra of
$\mathfrak{g}$. Thus, there exists a unique $k$-linear map
$\overline{f} : \mathfrak{g}/\mathfrak{g}' \to A$ such that $
\overline{f} \circ \pi = f$. Now, using the universal property of the
symmetric algebra we obtain that there exists a unique algebra homomorphism $\theta : S (\mathfrak{g}/\mathfrak{g}') \to
A$ such that $\theta \circ i = \overline{f}$, and hence
$\overline{\gamma_{\mathfrak{g}, \, A}} \, (\theta) = f$. Therefore, the map $\overline{\gamma_{\mathfrak{g}, \, A}}$ is bijective and
the proof is now finished.
\end{examples}

\begin{definition} \delabel{alguniv}
Let $\mathfrak{g}$ and $\mathfrak{h}$ be Leibniz algebras with
$\mathfrak{h}$ finite dimensional. Then the commutative algebra
${\mathcal A} (\mathfrak{h}, \, \mathfrak{g})$ is called the
\emph{universal algebra} of $\mathfrak{h}$ and  $\mathfrak{g}$.
When $\mathfrak{h} =  \mathfrak{g}$ we denote the universal
algebra of $\mathfrak{h}$ simply by ${\mathcal A} (\mathfrak{h})$.
\end{definition}

If $\{\tau_{i, j}^s \, | \, i, j, s = 1, \cdots, n \}$ are the
structure constants of $\mathfrak{h}$, where $n$ is the dimension
of $\mathfrak{h}$, then the polynomials defined for any $a$, $i$,
$j = 1, \cdots, n$ by:
\begin{equation}\eqlabel{poluniv2}
P_{(a, i, j)} ^{(\mathfrak{h})} := \sum_{u = 1}^n \, \tau_{i, j}^u
\, X_{au} - \sum_{s, t = 1}^n \, \tau_{s, t}^a \, X_{si} X_{tj} \,
\in k[X_{ij} \, | \, i, j = 1, \cdots, n]
\end{equation}
are called the \emph{universal polynomials} of $\mathfrak{h}$. It
follows from the proof of \thref{adjunctie} that ${\mathcal A}
(\mathfrak{h}) = k[X_{ij} \, | \, i, j = 1, \cdots, n]/J$, where
$J$ is the ideal generated by the universal polynomials $P_{(a, i,
j)} ^{(\mathfrak{h})}$, for all $a$, $i$, $j = 1, \cdots, n$.
Moreover, if $\{e_1, \cdots, e_n\}$ is a basis in $\mathfrak{h}$
then the canonical map
\begin{equation}\eqlabel{unitadj2}
\eta_{\mathfrak{h}} : \mathfrak{h} \to \mathfrak{h} \ot {\mathcal
A} (\mathfrak{h}), \quad \eta_{\mathfrak{h}} (e_i) := \sum_{s=1}^n
\, e_s \ot x_{si}
\end{equation}
for all $i = 1, \cdots, n$ is a Leibniz algebra homomorphism. The
commutative algebra ${\mathcal A} (\mathfrak{h})$ and the family
of polynomials $P_{(a, i, j)} ^{(\mathfrak{h})}$ are purely
algebraic objects that capture the entire information of the
Leibniz algebra $\mathfrak{h}$. Moreover, the universal algebra
${\mathcal A} (\mathfrak{h})$ satisfies the following universal
property:

\begin{corollary}\colabel{initialobj}
Let $\mathfrak{h}$ be a finite dimensional Leibniz algebra. Then
for any commutative algebra $A$ and any Leibniz
algebra homomorphism $f : \mathfrak{h} \to \mathfrak{h} \ot A$, there exists a
unique algebra homomorphism $\theta: {\mathcal A}
(\mathfrak{h}) \to A$ such that $f = ({\rm Id}_{\mathfrak{h}} \ot
\theta) \circ \eta_{\mathfrak{h}}$, i.e. the following diagram is commutative:
\begin{eqnarray} \eqlabel{univerah}
\xymatrix {& \mathfrak{h} \ar[rr]^-{\eta_{\mathfrak{h}} } \ar[rrd]_{ f
} & {} & {\mathfrak{h} \ot {\mathcal A} (\mathfrak{h} )} \ar[d]^{ {\rm Id}_{\mathfrak{h}} \ot \theta }\\
& {}  & {} &
{\mathfrak{h} \ot A} }
\end{eqnarray}
\end{corollary}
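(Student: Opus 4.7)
The plan is to observe that this corollary is the specialization of \thref{adjunctie} to the case $\mathfrak{g} = \mathfrak{h}$, so essentially nothing new needs to be done beyond unpacking that instance. Specifically, the adjunction bijection constructed in the proof of \thref{adjunctie}, namely
\begin{equation*}
\gamma_{\mathfrak{h}, \, A} \, : {\rm Hom}_{\rm Alg_k} \, ( {\mathcal A} (\mathfrak{h}), \, A) \to {\rm Hom}_{\rm Lbz_k} \, (\mathfrak{h}, \, \mathfrak{h} \ot A), \quad \gamma_{\mathfrak{h}, \, A} (\theta) := ( {\rm Id}_{\mathfrak{h}} \ot \theta ) \circ \eta_{\mathfrak{h}},
\end{equation*}
is a bijection, so to every Leibniz algebra homomorphism $f : \mathfrak{h} \to \mathfrak{h} \ot A$ corresponds a unique algebra homomorphism $\theta : {\mathcal A} (\mathfrak{h}) \to A$ with $f = ({\rm Id}_{\mathfrak{h}} \ot \theta) \circ \eta_{\mathfrak{h}}$, which is precisely the content of the corollary.

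If a more self-contained argument is preferred, one can reproduce the relevant fragment of the proof of \thref{adjunctie}. Fixing a basis $\{e_1, \dots, e_n\}$ of $\mathfrak{h}$ with structure constants $\tau_{i,j}^s$ and writing $f(e_i) = \sum_{s=1}^n e_s \ot d_{si}$ for unique scalars $d_{si} \in A$, the requirement that $f$ be a Leibniz algebra homomorphism translates, by the exact same computation used in \thref{adjunctie}, into the identities
\begin{equation*}
\sum_{u = 1}^n \, \tau_{i, j}^u \, d_{au} = \sum_{s, t = 1}^n \, \tau_{s, t}^a \, d_{si} d_{tj}
\end{equation*}
for all $a, i, j = 1, \dots, n$. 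These are precisely the relations obtained by substituting $X_{ij} \mapsto d_{ij}$ in the universal polynomials \equref{poluniv2}, so the unique algebra map $v : k[X_{ij}] \to A$ provided by the universal property of the polynomial algebra vanishes on the ideal $J$ and descends to the required $\theta : {\mathcal A}(\mathfrak{h}) \to A$ with $\theta(x_{si}) = d_{si}$.

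Commutativity of \equref{univerah} is then immediate from the defining formula \equref{unitadj2}, since $({\rm Id}_{\mathfrak{h}} \ot \theta)(\eta_{\mathfrak{h}}(e_i)) = \sum_s e_s \ot \theta(x_{si}) = \sum_s e_s \ot d_{si} = f(e_i)$. Uniqueness of $\theta$ follows because the family $\{x_{si}\}$ generates ${\mathcal A}(\mathfrak{h})$ as a $k$-algebra: any competing $\tilde{\theta}$ satisfying the diagram is forced to send $x_{si}$ to $d_{si}$ for every $s, i$, and hence must agree with $\theta$ everywhere. There is no genuine obstacle to overcome; the corollary is essentially a repackaging of the $\mathfrak{g} = \mathfrak{h}$ instance of the adjunction already established in \thref{adjunctie}.
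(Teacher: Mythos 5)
Your proposal is correct and matches the paper's own proof, which simply cites the bijection $\gamma_{\mathfrak{g},A}$ from \equref{adjp} specialized to $\mathfrak{g} := \mathfrak{h}$. The additional self-contained unpacking you give is a faithful restatement of the relevant fragment of the proof of \thref{adjunctie} and introduces nothing new.
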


\begin{proof}
Follows straightforward from the bijection given in \equref{adjp} for
$\mathfrak{g}:= \mathfrak{h}$.
\end{proof}

\begin{remark} \relabel{remar2}
If $\mathfrak{h}$ is a Lie algebra of dimension $n$, then the
structure constants are subject to the following relations $\tau_{i,i}^s = 0$ and
$\tau_{i,j}^s = - \tau_{j,i}^s$, for all $i$, $j$, $s = 1,\cdots,
n$. Consequently, we can easily see that the universal polynomials
of $\mathfrak{h}$ fulfill the following conditions:
\begin{equation}\eqlabel{liepol}
P_{(a, i, i)} ^{(\mathfrak{h})} = 0 \quad \quad {\rm and }\quad
\quad P_{(a, i, j)} ^{(\mathfrak{h})} = - P_{(a, j, i)}
^{(\mathfrak{h})}
\end{equation}
for all $a$, $i$, $j = 1, \cdots n$, $i \neq j$. Thus, in the case
of Lie algebras the universal algebra ${\mathcal A}
(\mathfrak{h})$ takes a simplified form. We provide further
examples in the sequel.
\end{remark}

\begin{examples} \exlabel{unvlie}
1. Let $\mathfrak{h} := {\rm aff} (2, k)$ be the affine
$2$-dimensional Lie algebra with basis $\{e_1, e_2\}$ and
bracket given by $\left[e_1, \, e_2 \right] = e_1$. Then, we have:
\begin{eqnarray*}
{\mathcal A} ({\rm aff} (2, k)) &\cong& \, k [X_{11}, X_{12},
X_{21}, X_{22}]/(X_{21}, \, X_{11} - X_{12}X_{22} + X_{12}X_{21}) \\
& \cong& \, k[X, Y, Z]/(X - YZ)
\end{eqnarray*}
Indeed, the non-zero structure constants of $\mathfrak{h}$
are $\tau_{1,2}^1 = 1 = - \tau_{2,1}^1$. Using \equref{liepol} from the previous remark
the only non-zero universal polynomials of the Lie algebra ${\rm aff} (2, k)$ are $P_{(1, 1, 2)} = X_{11} - X_{12}X_{22}
+ X_{12}X_{21}$, $P_{(2, 1, 2)} = X_{21}$, $-P_{(1, 1, 2)}$ and
$-P_{(2, 1, 2)}$. The conclusion now follows.

2. Let $\mathfrak{h} :=  \mathfrak{sl}(2, k)$ be the Lie algebra
with basis $\{e_1, e_2, e_3\}$ and bracket $\left[e_1, \, e_2 \right] = e_3$, $\left[e_3, \, e_2 \right] = -2
e_2$, $\left[e_3, \, e_1 \right] = 2e_1$. A routinely computation
proves that ${\mathcal A} (\mathfrak{sl}(2, k)) \cong k[X_{ij} \,
| \, i, j = 1, 2, 3]/J$, where $J$ is the ideal generated by the
following nine universal polynomials of $\mathfrak{sl}(2, k)$:
\begin{eqnarray*}
&& \hspace*{-10mm} X_{13} - 2 X_{12}X_{31} + 2X_{11}X_{32}, \,\,\, 2X_{11} -
2X_{11}X_{33} + 2 X_{13}X_{31},\,\,\,  2X_{12} - 2X_{13}X_{32} +
2X_{12}X_{33}\\
&& \hspace*{-10mm} X_{23} - 2 X_{21}X_{32} + 2X_{22}X_{31}, \,\,\, 2X_{21} -
2X_{23}X_{31} + 2 X_{21}X_{33},\,\,\, 2X_{22} - 2X_{22}X_{33} +
2X_{23}X_{32}\\
&& \hspace*{-10mm} X_{33} - X_{11}X_{22} + X_{12}X_{21}, \,\,\, 2X_{31} - X_{21}X_{13}
+  X_{11}X_{23}, \,\,\, 2X_{32} - X_{12}X_{23} + X_{13}X_{22}.
\end{eqnarray*}
\end{examples}

We recall that the polynomial algebra $M(n) = k[X_{ij} \, | \, i,
j = 1, \cdots, n]$ is a bialgebra with comultiplication and counit
given by $\Delta (X_{ij}) = \sum_{s=1}^n \, X_{is} \ot X_{sj}$ and
$\varepsilon (X_{ij}) = \delta_{i, j}$, for any $i$, $j=1, \cdots,
n$. We will prove now that the universal algebra ${\mathcal A}
(\mathfrak{h})$ is also a bialgebra.

\begin{proposition} \prlabel{bialgebra}
Let $\mathfrak{h}$ be a Leibniz algebra of dimension $n$. Then
there exists a unique bialgebra structure on ${\mathcal A}
(\mathfrak{h})$ such that the Leibniz algebra homomorphism
$\eta_{\mathfrak{h}} : \mathfrak{h} \to \mathfrak{h} \ot {\mathcal
A} (\mathfrak{h})$ becomes a right ${\mathcal A}
(\mathfrak{h})$-comodule structure on $\mathfrak{h}$. More
precisely, the comultiplication and the counit on ${\mathcal A}
(\mathfrak{h})$ are given for any $i$, $j=1, \cdots, n$ by
\begin{equation} \eqlabel{deltaeps}
\Delta (x_{ij}) = \sum_{s=1}^n \, x_{is} \ot x_{sj} \quad {\rm
and} \quad  \varepsilon (x_{ij}) = \delta_{i, j}
\end{equation}
Furthermore, the usual projection $\pi \colon M(n) \to {\mathcal A} (\mathfrak{h})$ becomes a bialgebra homomorphism.
\end{proposition}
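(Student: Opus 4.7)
The plan is to define both $\Delta$ and $\varepsilon$ via the universal property of \coref{initialobj}, and then derive the bialgebra axioms from uniqueness together with a direct check on the generators $x_{ij}$. First I would observe that for any commutative algebra $B$ the map $\eta_{\mathfrak{h}} \ot \Id_B : \mathfrak{h} \ot B \to \mathfrak{h} \ot {\mathcal A}(\mathfrak{h}) \ot B$ is a homomorphism of current Leibniz algebras (a two-line check using commutativity of $B$ and $\mathcal{A}(\mathfrak{h})$). Consequently, the composition $(\eta_{\mathfrak{h}} \ot \Id_{{\mathcal A}(\mathfrak{h})}) \circ \eta_{\mathfrak{h}} : \mathfrak{h} \to \mathfrak{h} \ot \bigl({\mathcal A}(\mathfrak{h}) \ot {\mathcal A}(\mathfrak{h})\bigr)$ is a Leibniz algebra map, so by \coref{initialobj} (with $A := {\mathcal A}(\mathfrak{h}) \ot {\mathcal A}(\mathfrak{h})$) there is a unique algebra homomorphism $\Delta : {\mathcal A}(\mathfrak{h}) \to {\mathcal A}(\mathfrak{h}) \ot {\mathcal A}(\mathfrak{h})$ satisfying $(\Id_{\mathfrak{h}} \ot \Delta) \circ \eta_{\mathfrak{h}} = (\eta_{\mathfrak{h}} \ot \Id) \circ \eta_{\mathfrak{h}}$. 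Analogously, the identity $\Id_{\mathfrak{h}} : \mathfrak{h} \to \mathfrak{h} \cong \mathfrak{h} \ot k$ is a Leibniz algebra map, which yields a unique algebra map $\varepsilon : {\mathcal A}(\mathfrak{h}) \to k$ with $(\Id_{\mathfrak{h}} \ot \varepsilon) \circ \eta_{\mathfrak{h}} = \Id_{\mathfrak{h}}$. These two defining conditions are precisely the right $\mathcal{A}(\mathfrak{h})$-comodule axioms for $\eta_{\mathfrak{h}}$, which also forces uniqueness of any bialgebra structure on $\mathcal{A}(\mathfrak{h})$ with the stated property.

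Evaluating on the basis vector $e_i$ yields the explicit formulas \equref{deltaeps}: expanding $(\eta_{\mathfrak{h}} \ot \Id) \eta_{\mathfrak{h}}(e_i) = \sum_{s,t} e_t \ot x_{ts} \ot x_{si}$ and comparing with $(\Id \ot \Delta)\eta_{\mathfrak{h}}(e_i) = \sum_t e_t \ot \Delta(x_{ti})$ gives $\Delta(x_{ij}) = \sum_s x_{is} \ot x_{sj}$, and similarly $\varepsilon(x_{ij}) = \delta_{ij}$. With these formulas in hand, coassociativity and the counit axioms reduce to elementary computations on the generators $x_{ij}$, since $\Delta$ and $\varepsilon$ are algebra homomorphisms and the $x_{ij}$ generate ${\mathcal A}(\mathfrak{h})$ as an algebra: both $(\Delta \ot \Id)\Delta(x_{ij})$ and $(\Id \ot \Delta)\Delta(x_{ij})$ equal $\sum_{s,t} x_{is} \ot x_{st} \ot x_{tj}$, while $(\varepsilon \ot \Id)\Delta(x_{ij}) = x_{ij} = (\Id \ot \varepsilon)\Delta(x_{ij})$.

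For the final assertion, the bialgebra structure on $M(n)$ sends $X_{ij}$ to $\sum_s X_{is} \ot X_{sj}$ and $\delta_{ij}$; applying $\pi \ot \pi$ and $\Id_k$ reproduces exactly the formulas \equref{deltaeps}, so $\pi$ intertwines the comultiplications and counits on generators of $M(n)$, hence is a bialgebra homomorphism. The only mildly delicate preliminary step is verifying that the maps $\eta_{\mathfrak{h}} \ot \Id_B$ are truly Leibniz homomorphisms between the appropriate current Leibniz algebras; once this is in place, the universal property does all the heavy lifting and everything else is either forced by uniqueness or follows from a one-line computation on the generators $x_{ij}$.
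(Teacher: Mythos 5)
Your proposal is correct and follows essentially the same route as the paper: both define $\Delta$ and $\varepsilon$ by applying the universal property of ${\mathcal A}(\mathfrak{h})$ to $(\eta_{\mathfrak{h}} \ot \Id)\circ\eta_{\mathfrak{h}}$ and to the canonical map $\mathfrak{h}\to\mathfrak{h}\ot k$, then read off the formulas \equref{deltaeps} on the basis $e_i$ and verify the coalgebra axioms on the generators $x_{ij}$. Your explicit attention to why $\eta_{\mathfrak{h}}\ot\Id_B$ is a Leibniz homomorphism and to the final claim about $\pi$ only fills in details the paper leaves implicit.
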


\begin{proof}
Consider the Leibniz algebra homomorphism $f : \mathfrak{h} \to
\mathfrak{h} \ot {\mathcal A} (\mathfrak{h}) \ot {\mathcal A}
(\mathfrak{h})$ defined by $f := (\eta_{\mathfrak{h}} \ot {\rm
Id}_{{\mathcal A} (\mathfrak{h})} ) \, \circ \,
\eta_{\mathfrak{h}}$. It follows from \coref{initialobj} that
there exists a unique algebra homomorphism $\Delta :
{\mathcal A} (\mathfrak{h}) \to {\mathcal A} (\mathfrak{h}) \ot
{\mathcal A} (\mathfrak{h})$ such that $({\rm Id}_{\mathfrak{h}}
\ot \Delta) \circ \eta_{\mathfrak{h}} = f$; that is, the following
diagram is commutative:
\begin{eqnarray} \eqlabel{delta}
\xymatrix {& \mathfrak{h} \ar[rr]^-{\eta_{\mathfrak{h}} } \ar[d]_{
\eta_{\mathfrak{h}} } & {} & {\mathfrak{h} \ot
{\mathcal A} (\mathfrak{h} )} \ar[d]^{ {\rm Id}_{\mathfrak{h}} \ot \Delta }\\
& \mathfrak{h} \ot {\mathcal A} (\mathfrak{h} )
\ar[rr]_-{\eta_{\mathfrak{h}}  \ot {\rm Id}_{{\mathcal A} (\mathfrak{h})}} & {} & {\mathfrak{h} \ot
{\mathcal A} (\mathfrak{h} ) \ot {\mathcal A} (\mathfrak{h} )} }
\end{eqnarray}
Now, if we evaluate the diagram \equref{delta} at
each $e_i$, for $i = 1, \cdots, n$ we obtain, taking into
account \equref{unitadj2}, the following:
\begin{eqnarray*}
&& \sum_{t=1}^n \, e_t \ot \Delta (x_{ti}) = (\eta_{\mathfrak{h}}
\ot {\rm Id}) (\sum_{s=1}^n \, e_s \ot x_{si}) = \sum_{s=1}^n (
\sum_{t=1}^n \, e_t \ot x_{ts}) \ot x_{si}\\
&& = \sum_{t=1}^n \, e_t \ot (\sum_{s=1}^n x_{ts} \ot x_{si} )
\end{eqnarray*}
and hence $\Delta (x_{ti}) = \sum_{s=1}^n \, x_{ts} \ot x_{si}$,
for all $t$, $i=1, \cdots, n$. Obviously, $\Delta$ given by this
formula on generators is coassociative. In a similar fashion,
applying once again \coref{initialobj}, we obtain that there
exists a unique algebra homomorphism $\varepsilon:
{\mathcal A} (\mathfrak{h}) \to k$ such that the following diagram
is commutative:
\begin{eqnarray} \eqlabel{epsilo}
\xymatrix {& \mathfrak{h} \ar[rr]^-{\eta_{\mathfrak{h}} } \ar[drr]_{
{\rm can}
} & {} & {\mathfrak{h} \ot {\mathcal A} (\mathfrak{h} )}
\ar[d]^{ {\rm Id}_{\mathfrak{h}} \ot \varepsilon }\\
& {} & {} & {\mathfrak{h} \ot k} }
\end{eqnarray}
where ${\rm can} : \mathfrak{h} \to \mathfrak{h}
\ot k$ is the canonical isomorphism, ${\rm can} (x) = x \ot 1$,
for all $x\in \mathfrak{h}$. If we evaluate this diagram at each
$e_t$, for $t = 1, \cdots, n$, we obtain $\varepsilon
(x_{ij}) = \delta_{i, j}$, for all $i$, $j=1, \cdots, n$. It can be easily checked that
$\varepsilon$ is a counit for $\Delta$, thus ${\mathcal A}
(\mathfrak{h})$ is a bialgebra. Furthermore, the commutativity of the above two diagrams
imply that the
canonical map $\eta_{\mathfrak{h}} : \mathfrak{h} \to \mathfrak{h}
\ot {\mathcal A} (\mathfrak{h})$ defines a right ${\mathcal
A} (\mathfrak{h})$-comodule structure on $\mathfrak{h}$.
\end{proof}

We call the pair $({\mathcal A}(\mathfrak{h}), \,
\eta_{\mathfrak{h}} )$, with the coalgebra structure defined in
\prref{bialgebra}, the {\it universal coacting bialgebra of the
Leibniz algebra $\mathfrak{h}$}. It fullfils the following
universal property which extends \coref{initialobj}:

\begin{theorem}\thlabel{univbialg}
Let $\mathfrak{h}$ be a Leibniz algebra of dimension $n$. Then,
for any commutative bialgebra $B$ and any Leibniz algebra
homomorphism $f \colon \mathfrak{h} \to \mathfrak{h} \otimes B$
which makes $\mathfrak{h}$ into a right $B$-comodule there exists
a unique bialgebra homomorphism $\theta \colon {\mathcal A}
(\mathfrak{h}) \to B$ such that the following diagram is
commutative:
\begin{eqnarray} \eqlabel{univbialg}
\xymatrix {& \mathfrak{h} \ar[r]^-{\eta_{\mathfrak{h}}} \ar[dr]_-{f
} & {\mathfrak{h} \ot {\mathcal A} (\mathfrak{h} )} \ar[d]^{ {\rm Id}_{\mathfrak{h}} \ot \theta }\\
& {}  &
{\mathfrak{h} \ot B} }
\end{eqnarray}
\end{theorem}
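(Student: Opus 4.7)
The plan is to first extract $\theta$ as a commutative algebra homomorphism via the universal property of ${\mathcal A}(\mathfrak{h})$ already established in \coref{initialobj}, and then to upgrade it to a bialgebra map by exploiting the fact that the coalgebra structure on ${\mathcal A}(\mathfrak{h})$ was itself manufactured, in \prref{bialgebra}, from this same universal property. Concretely, since $B$ is in particular a commutative algebra and $f \colon \mathfrak{h} \to \mathfrak{h} \ot B$ is a Leibniz algebra homomorphism, \coref{initialobj} yields a \emph{unique} algebra homomorphism $\theta \colon {\mathcal A}(\mathfrak{h}) \to B$ with $({\rm Id}_{\mathfrak{h}} \ot \theta) \circ \eta_{\mathfrak{h}} = f$. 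This settles the uniqueness clause of the theorem at once (a bialgebra map is a fortiori an algebra map), so only multiplicativity with respect to comultiplication and counit remains to be verified.

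For compatibility with the counit, I would observe that $\varepsilon_B \circ \theta \colon {\mathcal A}(\mathfrak{h}) \to k$ is an algebra homomorphism, and that the comodule counit axiom $({\rm Id}_{\mathfrak{h}} \ot \varepsilon_B) \circ f = {\rm can}$ immediately gives $({\rm Id}_{\mathfrak{h}} \ot (\varepsilon_B \circ \theta)) \circ \eta_{\mathfrak{h}} = {\rm can}$. Since the counit $\varepsilon$ of ${\mathcal A}(\mathfrak{h})$ was characterized in \prref{bialgebra} by exactly the same identity, the uniqueness in \coref{initialobj} (applied with target $A := k$) forces $\varepsilon_B \circ \theta = \varepsilon$.

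For compatibility with comultiplication, I would check that $\Delta_B \circ \theta$ and $(\theta \ot \theta) \circ \Delta_{{\mathcal A}(\mathfrak{h})}$ are both algebra homomorphisms ${\mathcal A}(\mathfrak{h}) \to B \ot B$ (valid since $B$, and hence $B \ot B$, is commutative) whose precomposition with $\eta_{\mathfrak{h}}$ coincides as a Leibniz algebra map $\mathfrak{h} \to \mathfrak{h} \ot (B \ot B)$. Indeed, the first precomposition equals $({\rm Id}_{\mathfrak{h}} \ot \Delta_B) \circ f$, while the second, by the characterizing identity $({\rm Id}_{\mathfrak{h}} \ot \Delta_{{\mathcal A}(\mathfrak{h})}) \circ \eta_{\mathfrak{h}} = (\eta_{\mathfrak{h}} \ot {\rm Id}) \circ \eta_{\mathfrak{h}}$ from \prref{bialgebra}, unfolds to $(f \ot {\rm Id}_B) \circ f$. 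The coassociativity axiom of the right $B$-comodule $\mathfrak{h}$ is precisely $({\rm Id}_{\mathfrak{h}} \ot \Delta_B) \circ f = (f \ot {\rm Id}_B) \circ f$, so the two Leibniz maps agree, and a final invocation of \coref{initialobj} with target $B \ot B$ gives $\Delta_B \circ \theta = (\theta \ot \theta) \circ \Delta_{{\mathcal A}(\mathfrak{h})}$.

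The proof is essentially a diagram chase, so the main obstacle is conceptual rather than computational: one must recognize that the bialgebra structure on ${\mathcal A}(\mathfrak{h})$ built in \prref{bialgebra} is precisely the one that makes \thref{univbialg} a tautology, and that the only genuine ingredients beyond the algebra-level universal property are the comodule axioms for $(B, f)$, which are exactly what translate into the coalgebra compatibilities of $\theta$. A minor bookkeeping point to be attentive to is that $\theta \ot \theta$ and $\Delta_B$ land in (or come from) commutative algebras, so that the final uniqueness appeal via \coref{initialobj} is legitimate.
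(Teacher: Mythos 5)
Your proposal is correct and follows essentially the same route as the paper: extract $\theta$ as an algebra map from \coref{initialobj}, then use the uniqueness clause of that same universal property (with targets $k$ and $B\ot B$) together with the comodule axioms for $(\mathfrak{h},f)$ and the defining identities of $\Delta$ and $\varepsilon$ from \prref{bialgebra} to conclude $\Delta_B\circ\theta=(\theta\ot\theta)\circ\Delta$ and $\varepsilon_B\circ\theta=\varepsilon$. Your explicit remark that $B\ot B$ is commutative (so that \coref{initialobj} applies) is a small point the paper leaves implicit.
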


\begin{proof}
As ${\mathcal A} (\mathfrak{h})$ is the universal algebra of
$\mathfrak{h}$, there exists a unique algebra homomorphism $\theta
\colon {\mathcal A} (\mathfrak{h}) \to B$ such that diagram~\equref{univbialg} commutes. The proof will be finished
once we show that $\theta$ is a coalgebra homomorphism as well.
This follows by using again the universal property of ${\mathcal
A} (\mathfrak{h})$. Indeed, we obtain a unique algebra
homomorphism $\psi \colon {\mathcal A} (\mathfrak{h}) \to B
\otimes B$ such that the following diagram is commutative:
\begin{equation}\eqlabel{101}
\xymatrix{ \mathfrak{h}\ar[r]^-{\eta_{\mathfrak{h}} }\ar[rdd]_{\bigl({\rm Id}_{\mathfrak{h}}\otimes\,
\Delta_{B} \circ \theta\bigl)\circ \eta_{\mathfrak{h}} }  & {\mathfrak{h} \otimes {\mathcal A} (\mathfrak{h})}\ar[dd]^{{\rm Id}_{\mathfrak{h}} \otimes \psi}  \\
{} & {} \\
{} & {\mathfrak{h} \otimes B \otimes B}
}
\end{equation}
The proof will be finished once we show that $(\theta \otimes
\theta) \circ \Delta$ makes diagram~\equref{101} commutative.
Indeed, as  $f \colon
\mathfrak{h} \to \mathfrak{h} \otimes B$ is a right $B$-comodule
structure, we have:
\begin{eqnarray*}
\bigl({\rm Id}_{\mathfrak{h}} \otimes\, (\theta \otimes \theta) \circ \Delta\bigl)\circ \,\eta_{\mathfrak{h}} &=& \bigl({\rm Id}_{\mathfrak{h}} \otimes \theta \otimes \theta \bigl)\circ \underline{\bigl({\rm Id}_{\mathfrak{h}} \otimes \Delta\bigl)\circ \, \eta_{\mathfrak{h}}}\\
&\stackrel{\equref{delta}} {=}& \bigl({\rm Id}_{\mathfrak{h}} \otimes \theta \otimes \theta \bigl)\circ (\eta_{\mathfrak{h}} \otimes {\rm Id}_{{\mathcal A} (\mathfrak{h})})\circ \eta_{\mathfrak{h}}\\
&=& \bigl(\underline{({\rm Id}_{\mathfrak{h}} \otimes \theta) \circ \eta_{\mathfrak{h}}}\ \otimes \theta\bigl)\circ \, \eta_{\mathfrak{h}}\\\
&\stackrel{\equref{univbialg}} {=}& \bigl(f \otimes \theta\bigl)\circ \, \eta_{\mathfrak{h}}\\
&=& (f \otimes {\rm Id}_{B})\circ \underline{({\rm Id}_{\mathfrak{h}} \otimes \theta) \circ \, \eta_{\mathfrak{h}}}\\
&\stackrel{\equref{univbialg}} {=}& \underline{(f \otimes  {\rm Id}_{B})\circ f}\\
&=& ({\rm Id}_{\mathfrak{h}} \otimes \Delta_{B}) \circ \underline{f}\\
&\stackrel{\equref{univbialg}} {=}& ({\rm Id}_{\mathfrak{h}} \otimes \Delta_{B}) \circ ({\rm Id}_{\mathfrak{h}} \otimes \theta) \circ \eta_{\mathfrak{h}}\\
&=& ({\rm Id}_{\mathfrak{h}} \otimes \Delta_{B} \circ \theta) \circ \eta_{\mathfrak{h}}
\end{eqnarray*}
as desired. Similarly, one can show that  $\varepsilon_B \, \circ \, \theta = \varepsilon$ and the proof is now finished.
\end{proof}

In what follows we construct for any finite dimensional Leibniz
algebra $\mathfrak{h}$ a universal commutative Hopf algebra
${\mathcal H} (\mathfrak{h})$ together with a Leibniz algebra
homomorphism $\lambda_{\mathfrak{h}} \colon \mathfrak{h} \to
\mathfrak{h} \ot {\mathcal H} (\mathfrak{h})$ which makes
$\mathfrak{h}$ into a right ${\mathcal H}
(\mathfrak{h})$-comodule. This is achieved by using the free
commutative Hopf algebra generated by a commutative bialgebra
introduced in \cite[Chapter IV]{T}. Recall that assigning to a
commutative bialgebra the free commutative Hopf algebra defines a
functor $L \colon {\rm ComBiAlg}_k \to {\rm ComHopf}_k$ which is a
left adjoint to the forgetful functor ${\rm ComHopf}_k \to {\rm
ComBiAlg}_k$ (\cite[Theorem 65, (2)]{T}). Throughout, we denote by
$\mu \colon 1_{{\rm ComBiAlg}_k} \to UL$ the unit of the
adjunction $L \dashv U$.

\begin{definition}
Let $\mathfrak{h}$ be a finite dimensional Leibniz algebra. The
pair $\bigl({\mathcal H} (\mathfrak{h}) := L({\mathcal A}
(\mathfrak{h})), \, \lambda_{\mathfrak{h}} := ({\rm
Id}_{\mathfrak{h}} \ot \mu_{{\mathcal A} (\mathfrak{h})}) \, \circ
\,  \eta_{\mathfrak{h}}\bigl)$ is called the {\it universal
coacting Hopf algebra of $\mathfrak{h}$}.
\end{definition}

The pair $\bigl( {\mathcal H} (\mathfrak{h}), \,
\lambda_{\mathfrak{h}} \bigl)$ fulfills the following universal
property which shows that it is the initial object in the category
of all commutative Hopf algebras that coact on $\mathfrak{h}$.

\begin{theorem} \thlabel{univhopf}
Let $\mathfrak{h}$ be a finite dimensional Leibniz algebra. Then,
for any commutative Hopf algebra $H$ and any Leibniz algebra
homomorphism $f \colon \mathfrak{h} \to \mathfrak{h} \otimes H$
which makes $\mathfrak{h}$ into a right $H$-comodule there exists
a unique Hopf algebra homomorphism $g \colon {\mathcal H}
(\mathfrak{h}) \to H$ for which the following diagram is
commutative:
\begin{eqnarray} \eqlabel{univHopfalg}
\xymatrix {& \mathfrak{h} \ar[rr]^-{\lambda_{\mathfrak{h}} } \ar[drr]_{ f
} & {} & {\mathfrak{h} \ot {\mathcal H} (\mathfrak{h} )} \ar[d]^{ {\rm Id}_{\mathfrak{h}} \ot g }\\
& {}  & {} &
{\mathfrak{h} \ot H} }
\end{eqnarray}
\end{theorem}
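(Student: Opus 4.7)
The plan is to assemble the universal Hopf algebra property by composing two universal properties: the one for the universal coacting bialgebra $\mathcal{A}(\mathfrak{h})$ established in \thref{univbialg}, and the one for the left adjoint $L \colon \mathrm{ComBiAlg}_k \to \mathrm{ComHopf}_k$ recalled just before the statement. In essence, $\mathcal{H}(\mathfrak{h}) = L(\mathcal{A}(\mathfrak{h}))$ ought to be initial among commutative Hopf algebras coacting on $\mathfrak{h}$ simply because $\mathcal{A}(\mathfrak{h})$ is initial among commutative bialgebras coacting on $\mathfrak{h}$, and $L$ is the universal way to pass from commutative bialgebras to commutative Hopf algebras.

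More concretely, given a commutative Hopf algebra $H$ and a Leibniz algebra homomorphism $f \colon \mathfrak{h} \to \mathfrak{h} \ot H$ endowing $\mathfrak{h}$ with a right $H$-comodule structure, I would first regard $H$ as an object of $\mathrm{ComBiAlg}_k$ via the forgetful functor $U$, and apply \thref{univbialg} to produce a unique bialgebra homomorphism $\theta \colon \mathcal{A}(\mathfrak{h}) \to U(H)$ satisfying $({\rm Id}_{\mathfrak{h}} \ot \theta) \circ \eta_{\mathfrak{h}} = f$. Next I would invoke the adjunction $L \dashv U$: under the bijection $\mathrm{Hom}_{\mathrm{ComHopf}_k}(L(\mathcal{A}(\mathfrak{h})), H) \cong \mathrm{Hom}_{\mathrm{ComBiAlg}_k}(\mathcal{A}(\mathfrak{h}), U(H))$, the bialgebra morphism $\theta$ corresponds to a unique Hopf algebra morphism $g \colon \mathcal{H}(\mathfrak{h}) \to H$ characterized by $U(g) \circ \mu_{\mathcal{A}(\mathfrak{h})} = \theta$.

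To verify that this $g$ makes diagram \equref{univHopfalg} commute, I would compute
\begin{eqnarray*}
({\rm Id}_{\mathfrak{h}} \ot g) \circ \lambda_{\mathfrak{h}}
&=& ({\rm Id}_{\mathfrak{h}} \ot g) \circ ({\rm Id}_{\mathfrak{h}} \ot \mu_{\mathcal{A}(\mathfrak{h})}) \circ \eta_{\mathfrak{h}} \\
&=& \bigl({\rm Id}_{\mathfrak{h}} \ot (g \circ \mu_{\mathcal{A}(\mathfrak{h})})\bigl) \circ \eta_{\mathfrak{h}} \\
&=& ({\rm Id}_{\mathfrak{h}} \ot \theta) \circ \eta_{\mathfrak{h}} \,=\, f,
\end{eqnarray*}
using the definition of $\lambda_{\mathfrak{h}}$ and the defining property of $\theta$. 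For uniqueness, suppose $g' \colon \mathcal{H}(\mathfrak{h}) \to H$ is any Hopf algebra homomorphism with $({\rm Id}_{\mathfrak{h}} \ot g') \circ \lambda_{\mathfrak{h}} = f$. Then the composite $\theta' := U(g') \circ \mu_{\mathcal{A}(\mathfrak{h})} \colon \mathcal{A}(\mathfrak{h}) \to U(H)$ is a bialgebra homomorphism which, by the same calculation as above, satisfies $({\rm Id}_{\mathfrak{h}} \ot \theta') \circ \eta_{\mathfrak{h}} = f$. The uniqueness clause of \thref{univbialg} forces $\theta' = \theta$, and then the uniqueness half of the adjunction bijection $L \dashv U$ forces $g' = g$.

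There is no real obstacle here beyond bookkeeping — the argument is a formal concatenation of two universal properties and uses no new Leibniz-algebraic input. The only point that deserves attention is making sure the right $B$-comodule condition on $f$ (which was essential in the proof of \thref{univbialg} to show that $\theta$ is a coalgebra, not merely an algebra, morphism) is explicitly invoked when applying \thref{univbialg}; once that is in place, everything else is the standard adjunction yoga for $L \dashv U$.
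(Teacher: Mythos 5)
Your proof is correct and follows essentially the same route as the paper: apply \thref{univbialg} to produce the bialgebra homomorphism $\theta$, then the adjunction $L \dashv U$ to produce $g$, and verify commutativity of \equref{univHopfalg} by the same short computation. The only difference is that you spell out the uniqueness of $g$ (via the uniqueness clauses of \thref{univbialg} and of the adjunction bijection), a step the paper dismisses as obvious.
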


\begin{proof}
Let $H$ be a commutative Hopf algebra together with a Leibniz
algebra homomorphism $f \colon \mathfrak{h} \to \mathfrak{h}
\otimes H$ which makes $\mathfrak{h}$ into right a $H$-comodule.
Using \thref{univbialg} we obtain a unique bialgebra homomorphism
$\theta: {\mathcal A} (\mathfrak{h})  \to H$ which makes the
following diagram commutative:
\begin{eqnarray}\label{final1}
\xymatrix {& \mathfrak{h} \ar[rr]^-{\eta_{\mathfrak{h}} } \ar[drr]_{ f
} & {} & {\mathfrak{h} \ot {\mathcal A} (\mathfrak{h} )} \ar[d]^{ {\rm Id}_{\mathfrak{h}} \ot \theta }\\
& {}  & {} &
{\mathfrak{h} \ot H} }\qquad i.e.\,\,\,({\rm Id}_{\mathfrak{h}} \ot \theta ) \circ \eta_{\mathfrak{h}} = f.
\end{eqnarray}
Now the adjunction $L  \dashv U$ yields a unique Hopf algebra
homomorphism $g \colon L({\mathcal A} (\mathfrak{h})) \to H$ such
that the following diagram commutes:
\begin{eqnarray}\label{final2}
\xymatrix{ {\mathcal A} (\mathfrak{h})\ar[rr]^-{\mu_{{\mathcal A}
(\mathfrak{h})}}\ar[rrd]_{\theta}  & {} & {L({\mathcal A} (\mathfrak{h}))}\ar[d]^{ g}  \\
{} & {} & {H}
} \qquad {\rm i.e.}\,\,\, g \circ \mu_{{\mathcal A} (\mathfrak{h})} = \theta.
\end{eqnarray}
We are now ready to show that $g \colon {\mathcal H}
(\mathfrak{h}) = L({\mathcal A} (\mathfrak{h})) \to H$ is the
unique Hopf algebra homomorphism which makes diagram
\equref{univHopfalg} commutative. Indeed, putting all the above
together yields:
\begin{eqnarray*}
({\rm Id}_{\mathfrak{h}} \otimes g) \circ ({\rm Id}_{\mathfrak{h}} \ot \mu_{{\mathcal A} (\mathfrak{h})}) \circ \eta_{\mathfrak{h}}
&=& ({\rm Id}_{\mathfrak{h}} \ot \underline{g \circ \mu_{{\mathcal A} (\mathfrak{h})}}) \circ \eta_{\mathfrak{h}} \\
&\stackrel{(\ref{final2})} {=}& \underline{({\rm Id}_{\mathfrak{h}} \ot \theta) \circ \eta_{\mathfrak{h}}} \\
&\stackrel{(\ref{final1})} {=}& f.
\end{eqnarray*}
Since $g$ is obviously the unique Hopf algebra homomorphism which makes the above diagram commutative, the proof is finished.
\end{proof}

\section{Applications: the automorphism group and the classification of gradings on Leibniz algebras}\selabel{sect3}

In this section we discuss three applications of our previous
results which highlight the importance of the newly introduced universal coacting
bialgebra (Hopf algebra) of a Leibniz algebra. The first one concerns the description of the automorphism group ${\rm Aut}_{{\rm Lbz}}
(\mathfrak{h})$ of a given Leibniz/Lie algebra $\mathfrak{h}$
which is a classical and notoriously difficult problem arising
form Hilbert's invariant theory. We start by recalling a few basic facts from the theory of Hopf algebras
\cite{Sw, radford} which will be useful in the sequel. For any bialgebra $H$ the set of group-like
elements, denoted by $G(H) := \{g\in H \, | \, \Delta (g) = g \ot
g \,\, {\rm and } \,\, \varepsilon(g) = 1 \}$ is a monoid with
respect to the multiplication of $H$. We denote by $H^{\rm o}$,
the finite dual bialgebra of $H$, i.e.:
$$
H^{\rm o} := \{ f \in H^* \,| \, f(I) = 0, \, {\rm for \, some \,
ideal} \,\, I \lhd H \,\, {\rm with} \,\, {\rm dim}_k (H/I) <
\infty \}
$$
It is well known (see for instance \cite[pag. 62]{radford}) that
$G(H^{\rm o}) = {\rm Hom}_{\rm Alg_k} (H, \,  k)$, the set of all
algebra homomorphisms $H\to k$. Now, we shall give the first
application of the universal bialgebra of a Leibniz algebra.

\begin{theorem} \thlabel{automorf}
Let $\mathfrak{h}$ be a finite dimensional Leibniz algebra with
basis $\{e_1, \cdots, e_n\}$ and consider $U\bigl (G\bigl( {\mathcal A}
(\mathfrak{h})^{\rm o} \bigl)\bigl)$ to be the group of all
invertible group-like elements of the finite dual ${\mathcal A}
(\mathfrak{h})^{\rm o}$. Then the map defined for any $\theta \in
U\bigl(G\bigl( {\mathcal A} (\mathfrak{h})^{\rm o} \bigl)\bigl)$
and $i = 1, \cdots, n$ by:
\begin{equation} \eqlabel{izomono}
\overline{\gamma} : U \bigl(G\bigl( {\mathcal A}
(\mathfrak{h})^{\rm o} \bigl) \bigl) \to {\rm Aut}_{{\rm Lbz}}
(\mathfrak{h}), \qquad \overline{\gamma} (\theta) (e_i) :=
\sum_{s=1}^n \, \theta(x_{si}) \, e_s
\end{equation}
is an isomorphism of groups.
\end{theorem}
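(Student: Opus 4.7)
The plan is to realize $\overline{\gamma}$ as the restriction of the bijection from Corollary~\coref{morlbz} to the units of a suitable monoid structure, and then to identify both sets of units with the groups appearing in the statement.

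First, I would invoke the standard identification $G\bigl({\mathcal A}(\mathfrak{h})^{\rm o}\bigl) = {\rm Hom}_{{\rm Alg}_k}\bigl({\mathcal A}(\mathfrak{h}),\, k\bigl)$ recalled just above the theorem. Under this identification, the formula defining $\overline{\gamma}(\theta)$ is exactly the value on the fixed basis $\{e_1,\ldots,e_n\}$ of the bijection $\gamma(\theta) := ({\rm Id}_{\mathfrak{h}} \ot \theta) \circ \eta_{\mathfrak{h}}$ from Corollary~\coref{morlbz}, since $({\rm Id}_{\mathfrak{h}} \ot \theta)\bigl(\sum_s e_s \ot x_{si}\bigl) = \sum_s \theta(x_{si})\, e_s$ by \equref{unitadj2}. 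In particular, $\overline{\gamma}$ is automatically injective and its image lies in the monoid of Leibniz endomorphisms of $\mathfrak{h}$.

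The crux of the argument is then to transport the convolution product on ${\rm Hom}_{{\rm Alg}_k}\bigl({\mathcal A}(\mathfrak{h}),\, k\bigl)$, coming from the bialgebra structure established in \prref{bialgebra}, across $\gamma$. Using $\Delta(x_{si}) = \sum_t x_{st} \ot x_{ti}$ from \equref{deltaeps}, a direct computation yields
$$
\gamma(\theta * \theta')(e_i) = \sum_s (\theta * \theta')(x_{si})\, e_s = \sum_{s,t} \theta(x_{st})\theta'(x_{ti})\, e_s = \bigl(\gamma(\theta) \circ \gamma(\theta')\bigl)(e_i),
$$
while $\varepsilon(x_{si}) = \delta_{s,i}$ forces $\gamma(\varepsilon) = {\rm Id}_{\mathfrak{h}}$. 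Combined with the bijectivity from Corollary~\coref{morlbz}, this exhibits $\gamma$ as a monoid isomorphism between $\bigl(G({\mathcal A}(\mathfrak{h})^{\rm o}),\, *,\, \varepsilon\bigl)$ and $\bigl({\rm End}_{\rm Lbz}(\mathfrak{h}),\, \circ,\, {\rm Id}_{\mathfrak{h}}\bigl)$.

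The final step is to pass to the units of both monoids. The units of ${\rm End}_{\rm Lbz}(\mathfrak{h})$ are by definition ${\rm Aut}_{\rm Lbz}(\mathfrak{h})$, while those of $G({\mathcal A}(\mathfrak{h})^{\rm o})$ under convolution are $U\bigl(G({\mathcal A}(\mathfrak{h})^{\rm o})\bigl)$, so $\overline{\gamma}$ restricts to a group isomorphism between them. Injectivity is inherited from $\gamma$; for surjectivity, given $\sigma \in {\rm Aut}_{\rm Lbz}(\mathfrak{h})$ with inverse $\sigma^{-1}$, both are Leibniz endomorphisms, so $\theta := \gamma^{-1}(\sigma)$ and $\theta' := \gamma^{-1}(\sigma^{-1})$ lie in $G({\mathcal A}(\mathfrak{h})^{\rm o})$, and the monoid-isomorphism property together with injectivity of $\gamma$ forces $\theta * \theta' = \varepsilon = \theta' * \theta$, placing $\theta$ in $U\bigl(G({\mathcal A}(\mathfrak{h})^{\rm o})\bigl)$. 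The only bookkeeping obstacle is the convolution computation in the middle step; once verified, the result follows formally from Corollary~\coref{morlbz} and \prref{bialgebra}.
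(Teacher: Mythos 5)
Your proposal is correct and follows essentially the same route as the paper: identify $G\bigl({\mathcal A}(\mathfrak{h})^{\rm o}\bigl)$ with ${\rm Hom}_{{\rm Alg}_k}\bigl({\mathcal A}(\mathfrak{h}), k\bigl)$, show via the comultiplication formula \equref{deltaeps} that the bijection $\gamma$ of \coref{morlbz} is a monoid isomorphism onto ${\rm End}_{\rm Lbz}(\mathfrak{h})$ sending convolution to composition and $\varepsilon$ to ${\rm Id}_{\mathfrak{h}}$, and then restrict to units. Your explicit verification that every automorphism comes from an invertible group-like element is a small extra detail that the paper leaves implicit, but the argument is the same.
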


\begin{proof}
By applying \coref{morlbz} for $\mathfrak{g}:= \mathfrak{h}$ it
follows that the map
$$
\gamma : {\rm Hom}_{\rm Alg_k} ({\mathcal A} (\mathfrak{h}) , \,
k) \to {\rm End}_{{\rm Lbz}} (\mathfrak{h}), \quad \gamma (\theta)
= \bigl( {\rm Id}_{\mathfrak{h}} \ot \theta \bigl) \circ
\eta_{\mathfrak{g}}
$$
is bijective. Based on formula \equref{unitadj2}, it can be easily
seen that $\gamma$ takes the form given in \equref{izomono}. As mentioned above we have $ {\rm Hom}_{\rm Alg_k} ({\mathcal A}
(\mathfrak{h}) , k) = G\bigl( {\mathcal A} (\mathfrak{h})^{\rm o}
\bigl)$. Therefore, since $\overline{\gamma}$ is the restriction
of $\gamma$ to the invertible elements of the two monoids, the
proof will be finished once we show that $\gamma$ is an
isomorphism of monoids. To this end, recall that the monoid structure on
${\rm End}_{{\rm Lbz}} (\mathfrak{h})$ is given by the usual
composition of endomorphisms of the Leibniz algebra
$\mathfrak{h}$, while $G\bigl( {\mathcal A} (\mathfrak{h})^{\rm o}
\bigl)$ is a monoid with respect to the convolution product, that
is:
\begin{equation}\eqlabel{convolut}
(\theta_1 \star \theta_2) (x_{sj}) = \sum_{t=1}^n \,
\theta_1(x_{st}) \theta_2(x_{tj})
\end{equation}
for all $\theta_1$, $\theta_2 \in G\bigl( {\mathcal A}
(\mathfrak{h})^{\rm o} \bigl)$ and $j$, $s = 1, \cdots, n$. Now,
for any $\theta_1$, $\theta_2 \in G\bigl( {\mathcal A}
(\mathfrak{h})^{\rm o} \bigl)$ and $j = 1, \cdots, n$ we have:
\begin{eqnarray*}
&& \bigl(\gamma(\theta_1) \circ \gamma(\theta_2) \bigl) (e_j) =
\gamma(\theta_1) \bigl( \sum_{t=1}^n \, \theta_2 (x_{tj}) e_t
\bigl) = \sum_{s, t = 1}^n \, \theta_1(x_{st}) \theta_2 (x_{tj})\,
e_s \\
&& = \sum_{s=1}^n \, \bigl( \sum_{t=1}^n \, \theta_1(x_{st})
\theta_2 (x_{tj}) \bigl) \, e_s = \sum_{s=1}^n \, (\theta_1 \star
\theta_2) (x_{sj}) \, e_s = \gamma (\theta_1 \star \theta_2) (e_j)
\end{eqnarray*}
thus, $\gamma (\theta_1 \star \theta_2) = \gamma(\theta_1) \circ
\gamma(\theta_2)$, and therefore $\gamma$ respects the
multiplication. We are left to show that $\gamma$ also preserves
the unit. Note that the unit $1$ of the monoid $G\bigl( {\mathcal
A} (\mathfrak{h})^{\rm o} \bigl)$ is the counit
$\varepsilon_{{\mathcal A} (\mathfrak{h})}$ of the bialgebra
${\mathcal A} (\mathfrak{h})$ and we obtain:
$$
\gamma(1) (e_i) = \gamma (\varepsilon_{{\mathcal A}
(\mathfrak{h})}) (e_i) = \sum_{s=1}^n \, \varepsilon_{{\mathcal A}
(\mathfrak{h})} (x_{si}) \, e_s = \sum_{s=1}^n \, \delta_{si} \,
e_s = e_i = {\rm Id}_{\mathfrak{h}} (e_i)
$$
Thus we have proved that $\gamma$ is an isomorphism of monoids and
the proof is finished.
\end{proof}

\begin{remark}
We point out that the construction of ${\mathcal A} (\mathfrak{h})$, as well as ${\mathcal A} (\mathfrak{h}, \mathfrak{g})$, and the description of the automorphism group of $\mathfrak{h}$ can be achieved for an arbitrary finite dimensional algebra $\mathfrak{h}$, not necessarily Lie or Leibniz. This avenue of investigation is considered in a a forthcoming paper of the authors. 
\end{remark}

The second application we consider is related
to the classical problem of classifying all $G$-gradings on
a given Leibniz/Lie algebras. Let $G$ be an abelian group and
$\mathfrak{h}$ a Leibniz algebra. Recall that a
\emph{$G$-grading} on $\mathfrak{h}$ is a vector space
decomposition $\mathfrak{h} = \oplus_{\sigma \in G} \,
\mathfrak{h}_{\sigma}$ such that $\left [\mathfrak{h}_{\sigma}, \,
\mathfrak{h}_{\tau} \right] \subseteq \mathfrak{h}_{\sigma \tau}$
for all $\sigma$, $\tau \in G$. For more detail on the problem of
classifying $G$-gradings on Lie algebras see \cite{eld} and the
references therein. In what follows $k[G]$ denotes the usual group
algebra of a group $G$.

\begin{proposition}\prlabel{graduari}
Let $G$ be an abelian group and $\mathfrak{h}$ a finite
dimensional Leibniz algebra. Then there exists a bijection between
the set of all $G$-gradings on $\mathfrak{h}$ and the set of all
bialgebra homomorphisms ${\mathcal A} (\mathfrak{h}) \to k[G]$.

The bijection is given such that the $G$-grading on $\mathfrak{h}
= \oplus_{\sigma \in G} \, \mathfrak{h}_{\sigma}^{(\theta)} $
associated to a bialgebra map $\theta: {\mathcal A} (\mathfrak{h})
\to k[G]$ is given by:
\begin{equation}\eqlabel{gradass}
\mathfrak{h}_{\sigma}^{(\theta)} := \{ x \in  \mathfrak{h} \, | \,
\bigl({\rm Id}_{\mathfrak{h}} \ot \theta \bigl) \, \circ \,
\eta_{\mathfrak{h}} (x) = x \ot \sigma  \}
\end{equation}
for all $\sigma \in G$.
\end{proposition}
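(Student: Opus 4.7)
The plan is to obtain the bijection as a composition of two natural correspondences. First, I would establish the classical equivalence between $G$-gradings on the Leibniz algebra $\mathfrak{h}$ and right $k[G]$-comodule Leibniz algebra structures on $\mathfrak{h}$. Second, I would invoke \thref{univbialg} with the commutative bialgebra $B := k[G]$ (which is commutative precisely because $G$ is abelian) to convert such comodule structures into bialgebra homomorphisms ${\mathcal A}(\mathfrak{h}) \to k[G]$.

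For the first correspondence, given a grading $\mathfrak{h} = \oplus_{\sigma \in G} \, \mathfrak{h}_{\sigma}$, I define $\rho \colon \mathfrak{h} \to \mathfrak{h} \otimes k[G]$ by $\rho(x) := x \otimes \sigma$ for every homogeneous $x \in \mathfrak{h}_{\sigma}$ and extend linearly. Since each $\sigma \in G$ is group-like in $k[G]$, the map $\rho$ is automatically a right $k[G]$-coaction, and $\rho$ is a Leibniz algebra homomorphism (with respect to the current bracket on $\mathfrak{h} \otimes k[G]$ defined in \equref{curant}) if and only if $[\mathfrak{h}_{\sigma}, \, \mathfrak{h}_{\tau}] \subseteq \mathfrak{h}_{\sigma \tau}$ for all $\sigma$, $\tau \in G$. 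Conversely, any right $k[G]$-comodule structure $\rho$ on a vector space decomposes it as $\mathfrak{h} = \oplus_{\sigma \in G} \, \mathfrak{h}_{\sigma}$ with $\mathfrak{h}_{\sigma} := \{x \in \mathfrak{h} \, | \, \rho(x) = x \otimes \sigma\}$, because the group-like elements $\{\sigma\}_{\sigma \in G}$ form a $k$-basis of $k[G]$; demanding that $\rho$ be a Leibniz algebra map then forces back the grading condition $[\mathfrak{h}_{\sigma}, \, \mathfrak{h}_{\tau}] \subseteq \mathfrak{h}_{\sigma \tau}$. These two assignments are easily seen to be mutually inverse.

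For the second correspondence, \thref{univbialg} applied to $B = k[G]$ yields a bijection between Leibniz algebra homomorphisms $f \colon \mathfrak{h} \to \mathfrak{h} \otimes k[G]$ making $\mathfrak{h}$ into a right $k[G]$-comodule and bialgebra homomorphisms $\theta \colon {\mathcal A}(\mathfrak{h}) \to k[G]$, via $\theta \mapsto ({\rm Id}_{\mathfrak{h}} \otimes \theta) \circ \eta_{\mathfrak{h}}$. Composing the two bijections gives the claimed correspondence, and tracing through the definitions shows that the $\sigma$-component of the grading associated to $\theta$ is precisely $\{x \in \mathfrak{h} \, | \, ({\rm Id}_{\mathfrak{h}} \otimes \theta) \circ \eta_{\mathfrak{h}}(x) = x \otimes \sigma\}$, recovering the formula in \equref{gradass}.

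The only delicate point is verifying that the first correspondence is well-defined in both directions; however this is routine and parallels the classical associative case, since the current bracket on $\mathfrak{h} \otimes k[G]$ is bilinear and respects the tensor decomposition in the obvious way. All other ingredients are direct applications of results already established in the paper.
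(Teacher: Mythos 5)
Your proposal is correct and follows essentially the same route as the paper's own proof: apply \thref{univbialg} with $B := k[G]$ to pass between bialgebra homomorphisms and $k[G]$-comodule Leibniz algebra structures, then use the standard correspondence between $k[G]$-coactions and $G$-graded decompositions, checking that the coaction is a Leibniz algebra map exactly when $[\mathfrak{h}_{\sigma}, \, \mathfrak{h}_{\tau}] \subseteq \mathfrak{h}_{\sigma\tau}$. No gaps; the paper even delegates the first correspondence to the same classical fact you cite.
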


\begin{proof} By applying \thref{univbialg} for the
commutative bialgebra $B := k[G]$ yields a bijection between the
set of all bialgebra homomorphisms ${\mathcal A} (\mathfrak{h})
\to k[G]$ and the set of all Leibniz algebra homomorphisms $f
\colon \mathfrak{h} \to \mathfrak{h} \otimes k[G]$ which makes
$\mathfrak{h}$ into a right $k[G]$-comodule. The proof is finished
if we show that the latter set is in bijective correspondence with
the set of all $G$-gradings on $\mathfrak{h}$.

Indeed, it is a well known fact in Hopf algebra theory
\cite[Excercise 3.2.21]{radford} that there exists a bijection
between the set of all right $k[G]$-comodule structures $f:
\mathfrak{h} \to \mathfrak{h} \ot k[G]$ on the vector space
$\mathfrak{h}$ and the set of all vector space decompositions
$\mathfrak{h} = \oplus_{\sigma \in G} \, \mathfrak{h}_{\sigma}$.
The bijection is given such that $x_{\sigma} \in
\mathfrak{h}_{\sigma}$ if and only if $f (x_{\sigma}) = x_{\sigma}
\ot \sigma$, for all $\sigma \in G$. The only thing left to prove
is that under this bijection a right coaction $f: \mathfrak{h} \to
\mathfrak{h} \ot k[G]$ is a Leibniz algebra homomorphism if and
only if $\left [\mathfrak{h}_{\sigma}, \, \mathfrak{h}_{\tau}
\right] \subseteq \mathfrak{h}_{\sigma \tau}$, for all $\sigma$,
$\tau \in G$. Indeed, let $\sigma$, $\tau \in G$ and
$x_{\sigma} \in \mathfrak{h}_{\sigma}$, $x_{\tau} \in
\mathfrak{h}_{\tau}$; then $\left [f(x_{\sigma}), \, f(x_{\tau})
\right] = \left [x_{\sigma}\ot \sigma, \, x_{\tau} \ot \tau
\right] = \left [x_{\sigma}, \, x_{\tau} \right] \ot \sigma \tau$.
Thus, we obtain that $ f (\left[ x_{\sigma}, \, x_{\tau} \right])
= \left [f(x_{\sigma}), \, f(x_{\tau}) \right]$ if and only if $
\left[x_{\sigma}, \, x_{\tau} \right] \in \mathfrak{h}_{\sigma
\tau}$. Hence, $f: \mathfrak{h} \to \mathfrak{h} \ot k[G]$ is a
Leibniz algebra homomorphism if and only if $\left
[\mathfrak{h}_{\sigma}, \, \mathfrak{h}_{\tau} \right] \subseteq
\mathfrak{h}_{\sigma \tau}$, for all $\sigma$, $\tau \in G$ and
the proof is now finished.
\end{proof}

Our next result classifies all $G$-gradings on a given
Leibniz algebra $\mathfrak{h}$, where $G$ is an abelian group. Recall that two $G$-gradings $\mathfrak{h} = \oplus_{\sigma \in G}
\, \mathfrak{h}_{\sigma} = \oplus_{\sigma \in G} \,
\mathfrak{h}_{\sigma} ^{'}$ on $\mathfrak{h}$ are called
\emph{isomorphic} if there exists $w \in {\rm Aut}_{{\rm Lbz}}
(\mathfrak{h})$ an automorphism of $\mathfrak{h}$ such that $w
(\mathfrak{h}_{\sigma}) \subseteq \mathfrak{h}_{\sigma} ^{'}$, for
all $\sigma \in G$. Since $w$ is bijective and $\mathfrak{h}$ is
$G$-graded we can prove that the last condition is equivalent to
$w (\mathfrak{h}_{\sigma}) =  \mathfrak{h}_{\sigma} ^{'}$, for all
$\sigma \in G$, which is the condition that usually appears in the
literature in the classification of $G$-gradings (\cite{eld}).
Indeed, let $x_{\sigma} ^{'} \in \mathfrak{h}_{\sigma} ^{'}$;
since $w$ is surjective there exists $y = \sum_{i=1}^t \,
y_{\tau_i} \in \mathfrak{h}$ such that $x_{\sigma} ^{'} = w(y) =
\sum_{i=1}^t \, w(y_{\tau_i})$, where $y_{\tau_i} \in
\mathfrak{h}_{\tau_i}$, for all $i = 1, \cdots, t$ are the
homogeneous components of $y$. Since $w(y_{\tau_i}) \in
\mathfrak{h}_{\tau_i}^{'}$ and $\mathfrak{h} = \oplus_{\sigma \in
G} \, \mathfrak{h}_{\sigma} ^{'}$ we obtain that  $w(y_{\tau_i}) =
0$, for all $\tau_i \neq \sigma$. As $w$ is injective, it follows that $y_{\tau_i} = 0$, for all $\tau_i \neq \sigma$; hence
$y = y_{\sigma}$ and $x_{\sigma} ^{'} = w (y_{\sigma}) \in w
(\mathfrak{h}_{\sigma})$, as needed.

We recall one more elementary fact from Hopf algebra theory: if
$H$ and $L$ are two bialgebras over a field $k$ then the abelian
group ${\rm Hom} (H, \, L)$ of all $k$-linear maps is an unital
associative algebra under the convolution product (\cite{Sw}):
$(\theta_1 \star \theta_2) (h) := \sum \, \theta_1 (h_{(1)})
\theta_2 (h_{(2)})$, for all $\theta_1$, $\theta_2 \in {\rm Hom}
(H, \, L)$ and $h\in H$.

\begin{definition}\delabel{conjug}
Let $G$ be an abelian group and $\mathfrak{h}$ a finite
dimensional Leibniz algebra. Two homomorphisms of bialgebras $\theta_1,
\theta_2: {\mathcal A} (\mathfrak{h}) \to k[G]$ are called
\emph{conjugate}, if there
exists $g \in U\bigl (G\bigl( {\mathcal A} (\mathfrak{h})^{\rm o}
\bigl)\bigl)$ an invertible group-like element of the finite dual
${\mathcal A} (\mathfrak{h})^{\rm o}$ such that $\theta_2 = g
\star \theta_1 \star g^{-1}$, in the convolution algebra ${\rm
Hom} \bigl( {\mathcal A} (\mathfrak{h}) , \, k[G] \bigl)$. We use the notation $\theta_1 \approx \theta_2$ to designate two conjugate homomorphisms.
\end{definition}

We denote by ${\rm Hom}_{\rm BiAlg} \, \bigl( {\mathcal A}
(\mathfrak{h}) , \, k[G] \bigl)/\approx $ the quotient of the
set of all bialgebra homomorphisms ${\mathcal A} (\mathfrak{h}) \to k[G]$
by the above equivalence relation and let $\hat{\theta}$ denote the
equivalence class of $\theta \in {\rm Hom}_{\rm BiAlg} \, \bigl(
{\mathcal A} (\mathfrak{h}) , \, k[G] \bigl)$. The next theorem
classifies all $G$-gradings on $\mathfrak{h}$.

\begin{theorem} \thlabel{nouaclas}
Let $G$ be an abelian group, $\mathfrak{h}$ a finite dimensional
Leibniz algebra and consider $G$-${\rm \textbf{gradings}}(\mathfrak{h})$ to be the
set of isomorphism classes of all $G$-gradings on $\mathfrak{h}$.
Then the map
$$
{\rm Hom}_{\rm BiAlg} \, \bigl( {\mathcal A} (\mathfrak{h}) , \,
k[G] \bigl)/\approx  \,\,\, \mapsto \,\, G{\rm-\textbf{gradings}}
(\mathfrak{h}), \qquad \hat{\theta} \mapsto
\mathfrak{h}^{(\theta)} := \oplus_{\sigma \in G} \,
\mathfrak{h}_{\sigma}^{(\theta)}
$$
where $\mathfrak{h}_{\sigma}^{(\theta)} = \{ x \in  \mathfrak{h}
\, | \, \bigl({\rm Id}_{\mathfrak{h}} \ot \theta \bigl) \, \circ
\, \eta_{\mathfrak{h}} (x) = x \ot \sigma  \}$, for all $\sigma
\in G$, is bijective.
\end{theorem}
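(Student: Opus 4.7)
The plan is to leverage \prref{graduari}, which already establishes a bijection between ${\rm Hom}_{\rm BiAlg}({\mathcal A}(\mathfrak{h}), k[G])$ and the set of all (not isomorphism classes of) $G$-gradings on $\mathfrak{h}$, sending $\theta$ to $\mathfrak{h}^{(\theta)} = \oplus_{\sigma \in G} \mathfrak{h}^{(\theta)}_\sigma$. It then suffices to show that the equivalence relation $\approx$ from \deref{conjug} corresponds exactly, under this bijection, to isomorphism of $G$-gradings. The bridge between the two sides is the group isomorphism $\overline{\gamma} \colon U\bigl(G({\mathcal A}(\mathfrak{h})^{\rm o})\bigl) \to {\rm Aut}_{\rm Lbz}(\mathfrak{h})$ from \thref{automorf}, which converts an invertible group-like $g$ into an automorphism $w := \overline{\gamma}(g)$ of $\mathfrak{h}$.

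The central computation I would carry out first is the following compatibility, valid for any bialgebra homomorphism $\theta \colon {\mathcal A}(\mathfrak{h}) \to k[G]$ and any $g \in U\bigl(G({\mathcal A}(\mathfrak{h})^{\rm o})\bigl)$: setting $f_\theta := ({\rm Id}_{\mathfrak{h}} \ot \theta) \circ \eta_{\mathfrak{h}}$ and $w := \overline{\gamma}(g)$, one has
\begin{equation*}
f_{g \star \theta \star g^{-1}} = (w \ot {\rm Id}_{k[G]}) \circ f_\theta \circ w^{-1}.
\end{equation*}
This is a direct unwinding: using $\Delta(x_{ij}) = \sum_s x_{is} \ot x_{sj}$ from \prref{bialgebra}, iterated coassociativity yields $(g \star \theta \star g^{-1})(x_{ji}) = \sum_{s,t} g(x_{js})\,\theta(x_{st})\,g^{-1}(x_{ti})$, and evaluating both sides of the claimed identity on each basis vector $e_i$ using $w(e_i) = \sum_s g(x_{si}) e_s$ and $w^{-1}(e_i) = \sum_s g^{-1}(x_{si}) e_s$ gives matching expressions.

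From this identity, a straightforward calculation shows that $x \in \mathfrak{h}^{(g \star \theta \star g^{-1})}_\sigma$ if and only if $w^{-1}(x) \in \mathfrak{h}^{(\theta)}_\sigma$, that is, $\mathfrak{h}^{(g \star \theta \star g^{-1})}_\sigma = w\bigl(\mathfrak{h}^{(\theta)}_\sigma\bigl)$ for every $\sigma \in G$. This immediately proves that the map $\theta \mapsto \mathfrak{h}^{(\theta)}$ descends to a well-defined map from ${\rm Hom}_{\rm BiAlg}({\mathcal A}(\mathfrak{h}), k[G])/\!\approx$ to $G$-$\textbf{gradings}(\mathfrak{h})$, and surjectivity is inherited from \prref{graduari}.

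For injectivity, I would run the argument in reverse. Suppose $\theta_1, \theta_2 \colon {\mathcal A}(\mathfrak{h}) \to k[G]$ are bialgebra homomorphisms such that $\mathfrak{h}^{(\theta_1)}$ and $\mathfrak{h}^{(\theta_2)}$ are isomorphic as $G$-gradings via some $w \in {\rm Aut}_{\rm Lbz}(\mathfrak{h})$ with $w(\mathfrak{h}^{(\theta_1)}_\sigma) = \mathfrak{h}^{(\theta_2)}_\sigma$ for all $\sigma \in G$. Set $g := \overline{\gamma}^{-1}(w) \in U\bigl(G({\mathcal A}(\mathfrak{h})^{\rm o})\bigl)$. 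The equality on homogeneous components translates, via the correspondence between coactions and gradings recalled in the proof of \prref{graduari}, into the operator identity $f_{\theta_2} = (w \ot {\rm Id}) \circ f_{\theta_1} \circ w^{-1}$. Applying the central computation in the reverse direction, both $f_{\theta_2}$ and $f_{g \star \theta_1 \star g^{-1}}$ coincide as coactions, and the uniqueness part of \thref{univbialg} forces $\theta_2 = g \star \theta_1 \star g^{-1}$, i.e.\ $\theta_1 \approx \theta_2$. The main technical obstacle is keeping track of the bookkeeping in the convolution product $g \star \theta \star g^{-1}$ once one uses $\Delta$ twice; every other step is formal manipulation based on results already proved.
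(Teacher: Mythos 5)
Your proposal is correct and follows essentially the same route as the paper: both reduce the problem via \prref{graduari} to comparing coactions, use \thref{automorf} to encode an automorphism $w$ as an invertible group-like $g$, and identify the intertwining condition $\rho^{(\theta_2)}\circ w=(w\ot{\rm Id})\circ\rho^{(\theta_1)}$ with the convolution identity $g\star\theta_1=\theta_2\star g$ on the generators $x_{ij}$. The only difference is presentational — you package the computation as the single identity $f_{g\star\theta\star g^{-1}}=(w\ot{\rm Id})\circ f_{\theta}\circ w^{-1}$ and then deduce well-definedness and injectivity from it, whereas the paper computes the two sides of the intertwining relation directly.
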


\begin{proof}
Let $\{e_1, \cdots, e_n\}$ be a basis in $\mathfrak{h}$. By
\prref{graduari} for any $G$-grading $\mathfrak{h}
= \oplus_{\sigma \in G} \, \mathfrak{h}_{\sigma}$ on
$\mathfrak{h}$ there exists a unique bialgebra homomorphism $\theta:
{\mathcal A} (\mathfrak{h}) \to k[G]$ such that
$\mathfrak{h}_{\sigma} = \mathfrak{h}_{\sigma}^{(\theta)}$, for
all $\sigma \in G$. It remains to investigate when two such $G$-gradings
are isomorphic. Let $\theta_1$, $\theta_2 : {\mathcal A}
(\mathfrak{h}) \to k[G]$ be two bialgebra homomorphisms and let
$\mathfrak{h} = \mathfrak{h}^{(\theta_1)} := \oplus_{\sigma \in G}
\, \mathfrak{h}_{\sigma}^{(\theta_1)} = \oplus_{\sigma \in G} \,
\mathfrak{h}_{\sigma}^{(\theta_2)} =: \mathfrak{h}^{(\theta_2)}$
be the associated $G$-gradings. It follows from the proof of
\prref{graduari} that defining a $G$-grading on $\mathfrak{h}$ is
equivalent (and the correspondence is bijective) to defining a right
$k[G]$-comodule structure $\rho :\mathfrak{h} \to \mathfrak{h} \ot
k[G]$ on $\mathfrak{h}$ such that the right coaction $\rho$ is a Leibniz algebra homomorphism. Moreover, the right coactions
$\rho^{(\theta_1)}$ and $\rho^{(\theta_2)} : \mathfrak{h} \to
\mathfrak{h} \ot k[G]$ are implemented from $\theta_1$ and
$\theta_2$ using \thref{univbialg}, that is, they are given for any
$j = 1, 2$ by
\begin{equation} \eqlabel{3000}
\rho^{(\theta_j)} : \mathfrak{h} \to \mathfrak{h} \ot k[G], \qquad
\rho^{(\theta_j)} (e_i) = \sum_{s=1}^n \, e_s \ot \theta_j
(x_{si})
\end{equation}
for all $i = 1, \cdots, n$. Now a well
known result in Hopf algebra theory states that the two $G$-gradings
$\mathfrak{h}^{(\theta_1)}$ and $\mathfrak{h}^{(\theta_2)}$ are
isomorphic if and only if $(\mathfrak{h}, \, \rho^{(\theta_1)})$
and $(\mathfrak{h}, \, \rho^{(\theta_2)})$ are isomorphic as
Leibniz algebras and right $k[G]$-comodules, that is there exists
$w : \mathfrak{h} \to \mathfrak{h}$ an automorphism of
$\mathfrak{h}$ such that $\rho^{(\theta_2)} \, \circ w = (w \ot
{\rm Id}_{k[G]}) \, \circ \rho^{(\theta_1)}$. We apply now
\thref{automorf}: for any Leibniz algebra automorphism $w :
\mathfrak{h} \to \mathfrak{h}$ there exists a unique invertible group-like element of the finite dual $g \in U\bigl
(G\bigl( {\mathcal A} (\mathfrak{h})^{\rm o} \bigl)\bigl)$ such that $w = w_g$ is given for any $i =
1, \cdots, n$ by
\begin{equation} \eqlabel{3001}
w_g (e_i) = \sum_{s=1}^n \, g(x_{si}) \, e_s
\end{equation}
Using \equref{3000} and \equref{3001} we can easily compute that:
$$
\bigl (\rho^{(\theta_2)} \, \circ w_g \bigl) (e_i) = \sum_{a=1}^n
\, e_a \ot \bigl( \sum_{s=1}^n \, \theta_2 (x_{as}) g(x_{si})
\bigl)
$$
and
$$
\bigl( (w_g \ot {\rm Id}_{k[G]}) \, \circ \rho^{(\theta_1)} \bigl)
(e_i) = \sum_{a=1}^n \, e_a \ot  \bigl(\sum_{s=1}^n \, g(x_{as})
\theta_1 (x_{si}) \bigl)
$$
for all $i = 1, \cdots, n$. Thus, the Leibniz algebra automorphism
$w_g : \mathfrak{h} \to \mathfrak{h}$ is also a right
$k[G]$-comodule map if and only if
\begin{equation} \eqlabel{3002}
\sum_{s=1}^n \, g(x_{as}) \theta_1 (x_{si}) = \sum_{s=1}^n \,
\theta_2 (x_{as}) g(x_{si})
\end{equation}
for all $a$, $i = 1, \cdots, n$. Taking into account the formula
of the comultiplication on the universal algebra ${\mathcal A}
(\mathfrak{h})$, the equation \equref{3002} can be easily rephrased as $(g \star \theta_1) ( x_{ai}) = (\theta_2 \star
g ) ( x_{ai})$, for all $a$, $i = 1, \cdots, n$ in the convolution
algebra ${\rm Hom} \bigl( {\mathcal A} (\mathfrak{h}) , \, k[G]
\bigl)$, or (since $\{x_{ai}\}_{a, i = 1, \cdots, n}$ is a system
of generators of ${\mathcal A} (\mathfrak{h})$) just as $g \star
\theta_1 = \theta_2 \star g$. We also note that $g: {\mathcal A}
(\mathfrak{h}) \to k$ is an invertible element in the above convolution algebra.

In conclusion, we have proved that two $G$-gradings $
\mathfrak{h}^{(\theta_1)}$ and $\mathfrak{h}^{(\theta_2)}$ on
$\mathfrak{h}$ associated to two bialgebra homomorphisms $\theta_1$,
$\theta_2 : {\mathcal A} (\mathfrak{h}) \to k[G]$ are isomorphic
if and only if there exists $g \in U\bigl (G\bigl( {\mathcal A}
(\mathfrak{h})^{\rm o} \bigl)\bigl)$ such that $\theta_2 = g \star
\theta_1 \star g^{-1}$, in the convolution algebra ${\rm Hom}
\bigl( {\mathcal A} (\mathfrak{h}) , \, k[G] \bigl)$, that is
$\theta_1 \approx  \theta_2$ and the
proof is now finished.
\end{proof}

Recall that an \emph{action as automorphisms of a group $G$ on
a Leibniz algebra  $\mathfrak{h}$} is a group homomorphism
$\varphi: G \to {\rm Aut}_{{\rm Lbz}} (\mathfrak{h})$.
We give now the last application of
the universal bialgebra ${\mathcal A} (\mathfrak{h})$.

\begin{proposition} \prlabel{actiuni}
Let $G$ be a finite group and $\mathfrak{h}$ a finite dimensional
Leibniz algebra with basis $\{e_1, \cdots, e_n\}$. Then there
exists a bijection between the set of all actions as automorphisms
of $G$ on $\mathfrak{h}$ and the set of all bialgebra
homomorphisms ${\mathcal A} (\mathfrak{h}) \to k[G]^*$.

The bijection is given such that the group homomorphism
$\varphi_{\theta} : G \to {\rm Aut}_{{\rm Lbz}} (\mathfrak{h})$
associated to a bialgebra homomorphism $\theta: {\mathcal A}
(\mathfrak{h}) \to k[G]^*$ is defined as follows:
\begin{equation}\eqlabel{actiuniexp}
\varphi_{\theta} ( g) (e_i) = \sum_{s=1}^n \, <\theta(x_{si}), \,
g> \, e_s
\end{equation}
for all $g\in G$ and $i = 1, \cdots, n$.
\end{proposition}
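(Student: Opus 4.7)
The plan is to reduce the statement to \thref{univbialg} applied with $B := k[G]^*$. Since $G$ is finite, the linear dual $k[G]^*$ is a finite dimensional commutative bialgebra (commutativity follows from the cocommutativity of $k[G]$, whose comultiplication is given by $\Delta(g) = g \ot g$, for all $g \in G$). Thus it suffices to exhibit a bijection between group homomorphisms $\varphi \colon G \to {\rm Aut}_{{\rm Lbz}}(\mathfrak{h})$ and Leibniz algebra homomorphisms $\rho \colon \mathfrak{h} \to \mathfrak{h} \ot k[G]^*$ which endow $\mathfrak{h}$ with a right $k[G]^*$-comodule structure.

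To construct this bijection, let $\{p_g\}_{g \in G}$ be the basis of $k[G]^*$ dual to $\{g\}_{g \in G}$. The well-known correspondence between right comodules over a finite dimensional Hopf algebra and left modules over its dual associates to a right $k[G]^*$-coaction $\rho$ on the vector space $\mathfrak{h}$ the left $k[G]$-action given by $g \cdot x := ({\rm Id}_{\mathfrak{h}} \ot \langle -, \, g\rangle) \, \rho(x)$; the inverse sends $\varphi$ to $\rho(x) = \sum_{g \in G} (g \cdot x) \ot p_g$. Under this correspondence, left $k[G]$-module structures on the underlying vector space $\mathfrak{h}$ are the same as group homomorphisms $G \to {\rm GL}(\mathfrak{h})$.

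The crucial point is that $\rho$ is a Leibniz algebra homomorphism if and only if each $g \in G$ acts as a Leibniz algebra endomorphism of $\mathfrak{h}$. Indeed, using the orthogonality relation $p_g p_h = \delta_{g, h} p_g$ in $k[G]^*$ we obtain, for all $x$, $y \in \mathfrak{h}$, that
\[
[\rho(x), \, \rho(y)] = \sum_{g \in G} [g \cdot x, \, g \cdot y] \ot p_g \qquad {\rm and} \qquad \rho([x, y]) = \sum_{g \in G} g \cdot [x, y] \ot p_g,
\]
so equality of the two sides is equivalent to $g \cdot [x, y] = [g \cdot x, \, g \cdot y]$ for all $g \in G$. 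Since $G$ is a group, a group homomorphism $G \to {\rm End}_{{\rm Lbz}}(\mathfrak{h})$ automatically factors through ${\rm Aut}_{{\rm Lbz}}(\mathfrak{h})$, the inverse of $g$ in $G$ providing the inverse of the corresponding endomorphism.

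Combining this equivalence with \thref{univbialg} applied to $B = k[G]^*$ yields the main bijection. The explicit formula \equref{actiuniexp} is obtained by unwinding both correspondences: a bialgebra homomorphism $\theta \colon {\mathcal A}(\mathfrak{h}) \to k[G]^*$ corresponds to the coaction $\rho_\theta := ({\rm Id}_{\mathfrak{h}} \ot \theta) \circ \eta_{\mathfrak{h}}$, which reads $\rho_\theta(e_i) = \sum_{s=1}^n e_s \ot \theta(x_{si})$ on the fixed basis, and evaluating $\langle -, \, g\rangle$ in the second tensor factor recovers $\varphi_\theta(g)(e_i) = \sum_{s=1}^n \langle \theta(x_{si}), \, g\rangle \, e_s$. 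The only technical point is the bracket compatibility verified above, which is a short calculation thanks to the orthogonality of the dual basis $\{p_g\}_{g \in G}$.
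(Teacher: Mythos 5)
Your proof is correct and follows essentially the same route as the paper: apply \thref{univbialg} with $B := k[G]^*$ and then use the standard equivalence between right $k[G]^*$-comodules and left $k[G]$-modules, checking that the coaction is a Leibniz algebra homomorphism precisely when each $g\in G$ acts as a Leibniz algebra automorphism. You have in fact supplied the dual-basis computation with $p_g p_h = \delta_{g,h}\, p_g$ that the paper explicitly leaves to the reader, so your write-up is a more detailed version of the same argument.
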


\begin{proof} Applying \thref{univbialg} for the
commutative bialgebra $B := k[G]^*$ gives a bijection between the
set of all bialgebra homomorphisms ${\mathcal A} (\mathfrak{h})
\to k[G]^*$ and the set of all Leibniz algebra homomorphisms $f
\colon \mathfrak{h} \to \mathfrak{h} \otimes k[G]^*$ which make
$\mathfrak{h}$ into a right $k[G]^*$-comodule. The proof is
finished if we show that the latter set is in bijective
correspondence with the set of all group homomorphisms $G \to {\rm
Aut}_{{\rm Lbz}} (\mathfrak{h})$. This follows by a standard
argument in Hopf algebra theory, similar to the one used in
\cite[Lemma 1]{rad2}. We indicate very briefly how the argument
goes, leaving the details to the reader. Indeed, the category of
right $k[G]^*$-comodules is isomorphic to the category of left
$k[G]$-modules. The left action $\bullet : k[G] \ot \mathfrak{h}
\to \mathfrak{h}$ of the group algebra $k[G]$ on $\mathfrak{h}$
associated to a right coaction $f \colon \mathfrak{h} \to
\mathfrak{h} \otimes k[G]^*$ is given by $g \bullet x := \, <
x_{<1>} , \, g> \, x_{<0>}$, where we used the $\sum$-notation for
comodules, $f(x) = x_{<0>} \ot x_{<1>} \in \mathfrak{h} \otimes
k[G]^*$ (summation understood). We associate to the action
$\bullet$ the map $\varphi_{\bullet} : G \to {\rm Aut}_k
(\mathfrak{h})$, $\varphi_{\bullet} (g) (x) := g \bullet x$, for
all $g \in G$ and $x \in \mathfrak{h}$. Now, it can be easily
checked that $f \colon \mathfrak{h} \to \mathfrak{h} \otimes
k[G]^*$ being a Leibniz algebra homomorphism is equivalent to
$\varphi_{\bullet} (g)$ being an automorphism of the Leibniz
algebra $\mathfrak{h}$, for all $g \in G$ and the proof is
finished.
\end{proof}

\end{document}